
\documentclass[a4paper,oneside,10pt, reqno] {amsart}
\usepackage{graphicx}
\usepackage{pstricks}
\usepackage{amscd}
\usepackage{amsmath}
\usepackage{amsxtra}





\usepackage{amsfonts,amssymb,amsmath,amsthm}
\usepackage{url}
\usepackage{enumerate}

\setlength{\headheight}{1.2\headheight}
\newcommand{\strutstretchdef}{\newcommand{\strutstretch}{\vphantom{\raisebox{1pt}{$\big($}\raisebox{-1pt}{$\big($}}}}
\theoremstyle{plain}
\newtheorem{theorem}{Theorem}[section]
\newtheorem{lemma}[theorem]{Lemma}
\newtheorem{proposition}[theorem]{Proposition}
\newtheorem{corollary}[theorem]{Corollary}

\theoremstyle{definition}
\newtheorem{definition}[theorem]{Definition}
\newtheorem{problem}[theorem]{Problem}

\theoremstyle{remark}
\newtheorem{remark}[theorem]{Remark}

\numberwithin{equation}{section}

\newlength{\struh}
\setlength{\struh}{16pt}
\newlength{\textminustop}
\setlength{\textminustop}{\textheight}
\addtolength{\textminustop}{-\topskip}

\strutstretchdef
\hyphenation{arc-length}




\newcommand*{\sib}[1]{\mathsf{sib}(#1)}

\newcommand*{\child}[1]{\mathsf{Chi}(#1)}
\newcommand*{\childn}[2]{{\mathsf{Chi}}^{\langle#1\rangle}(#2)}

\newcommand*{\parentn}[2]{{\mathsf{par}}^{\langle#1\rangle}(#2)}

\newcommand*{\Ge}{\geqslant}
\newcommand*{\lambdab}{\boldsymbol\lambda}

\newcommand*{\Le}{\leqslant}
\newcommand*{\parent}[1]{\mathsf{par}(#1)}

\newcommand*{\rootb}{{\mathsf{root}}}

\setlength\columnsep{14pt}
\columnseprule=1pt




\usepackage{mathrsfs}
\usepackage{epsfig}
\usepackage{ifthen}
\usepackage[all]{xy}

\newcommand{\ncom}{\newcommand}
\ncom{\bq}{\begin{equation}}
\ncom{\eq}{\end{equation}}
\ncom{\beqn}{\begin{eqnarray*}}
\ncom{\eeqn}{\end{eqnarray*}}
\ncom{\beq}{\begin{eqnarray}}
\ncom{\eeq}{\end{eqnarray}}
\ncom{\nno}{\nonumber}
\ncom{\rar}{\rightarrow}
\ncom{\Rar}{\Rightarrow}
\ncom{\noin}{\noindent}
\ncom{\bc}{\begin{centre}}
\ncom{\ec}{\end{centre}}
\ncom{\sz}{\scriptsize}
\ncom{\rf}{\ref}
\ncom{\sgm}{\sigma}
\ncom{\Sgm}{\Sigma}
\ncom{\dt}{\delta}
\ncom{\Dt}{Delta}
\ncom{\lmd}{\lambda}
\ncom{\Lmd}{\Lambda}
\ncom{\eps}{\epsilon}
\ncom{\pcc}{\stackrel{P}{>}}
\ncom{\dist}{{\rm\,dist}}
\ncom{\sspan}{{\rm\,span}}
\ncom{\im}{{\rm Im\,}}
\ncom{\sgn}{{\rm sgn\,}}
\ncom{\ba}{\begin{array}}
\ncom{\ea}{\end{array}}
\ncom{\eop}{\hfill{{\rule{2.5mm}{2.5mm}}}}
\ncom{\eoe}{\hfill{{\rule{1.5mm}{1.5mm}}}}
\ncom{\eof}{\hfill{{\rule{1.5mm}{1.5mm}}}}
\ncom{\hone}{\mbox{\hspace{1em}}}
\ncom{\htwo}{\mbox{\hspace{2em}}}
\ncom{\hthree}{\mbox{\hspace{3em}}}
\ncom{\hfour}{\mbox{\hspace{4em}}}
\ncom{\hsev}{\mbox{\hspace{7em}}}
\ncom{\vone}{\vskip 2ex}
\ncom{\vtwo}{\vskip 4ex}
\ncom{\vonee}{\vskip 1.5ex}
\ncom{\vthree}{\vskip 6ex}
\ncom{\vfour}{\vspace*{8ex}}
\ncom{\norm}{\|\;\;\|}
\ncom{\integ}[4]{\int_{#1}^{#2}\,{#3}\,d{#4}}
\ncom{\inp}[2]{\langle{#1},\,{#2} \rangle}
\ncom{\Inp}[2]{\Langle{#1},\,{#2} \Langle}
\ncom{\vspan}[1]{{{\rm\,span}\#1 \}}}
\ncom{\dm}[1]{\displaystyle {#1}}









\begin{document}
\title[Dirichlet Spaces associated with Directed Trees]{Dirichlet Spaces associated with \\ Locally Finite Rooted Directed Trees}

\author{Sameer Chavan \and Deepak Kumar Pradhan \and Shailesh Trivedi
}

\address{Department of Mathematics and Statistics\\
Indian Institute of Technology Kanpur, India}
   \email{chavan@iitk.ac.in}

   \email{dpradhan@iitk.ac.in}

 \address{School of Mathematics \\ Harish-Chandra Research Institute \\ 
HBNI, Chhatnag Road, Jhunsi, Allahabad 211019, India}
\email{shaileshtrivedi@hri.res.in}

\thanks{The work of the second author is supported through the NBHM Research Fellowship}

\keywords{Dirichlet space, directed tree, $q$-isometry}

\subjclass[2010]{Primary 46E22, 31C25; Secondary 47B20, 05C20}

\begin{abstract}
Let $\mathscr T=(V, \mathcal E)$ be a leafless, locally finite rooted directed tree.
We associate with $\mathscr T$ a one parameter family of Dirichlet spaces $\mathscr H_q~(q \Ge 1)$, which turn out to be Hilbert spaces of vector-valued holomorphic functions defined on the unit disc $\mathbb D$ in the complex plane. These spaces can be realized as reproducing kernel Hilbert spaces associated with the positive definite kernel
\beqn
\kappa_{\mathscr H_q}(z, w) &=& \sum_{n=0}^{\infty}\frac{(1)_n}{(q)_n}\,{z^n \overline{w}^n} ~P_{\langle e_\rootb \rangle} \\  &+& 
\sum_{v \in V_{\prec}} \sum_{n=0}^{\infty}  \frac{(n_v +2)_n}{(n_v + q+1)_n}\, {z^n \overline{w}^n}~P_{v}~(z, w \in \mathbb D),
\eeqn
where $V_{\prec}$ denotes the set of branching vertices of $\mathscr T$, $n_v$ denotes the depth of $v \in V$ in $\mathscr T,$ and $P_{\langle e_\rootb \rangle}$,  $~P_{v}~(v \in V_{\prec})$ are certain orthogonal projections. 
Further, we discuss the question of unitary equivalence of operators $\mathscr M^{(1)}_z$ and $\mathscr M^{(2)}_z$ of multiplication by $z$ on Dirichlet spaces $\mathscr H_q$ associated with directed trees $\mathscr T_1$ and $\mathscr T_2$ respectively. 
\end{abstract}

\maketitle


\section{Introduction} 

The present work is a sequel to \cite{CT}. In that paper, a rich interplay between the directed trees and analytic kernels of finite bandwidth has been exploited to study the weighted shifts on directed trees. The analysis therein was based on Shimorin's analytic model as introduced in \cite{Sh} (for an alternate approach to the function theory of weighted shifts on directed trees, the reader is referred to \cite{BDP}, \cite{BDPP}).
Our work is also motivated partly by the classification theorem \cite[Theorem 9.9]{ACJS} obtained for $2$-isometric weighted shifts on certain directed trees.

The objective of the present paper is to introduce Dirichlet spaces associated with certain rooted directed trees. This is carried out by introducing the so-called Dirichlet shifts on directed trees with weights being certain functions of depth of vertices, and thereafter applying Shimorin's construction \cite{Sh} to these shifts. These spaces can be thought of as vector-valued weighted Dirichlet spaces (cf. \cite{O}). We also discuss the spaces Cauchy dual to Dirichlet spaces. These turn out to be vector-valued Bergman spaces, which play a key role in answering the question of unitary equivalence of Dirichlet shifts associated with two directed trees. 
We collect below some preliminaries required to define the Dirichlet shifts. For a detailed exposition on weighted shifts on directed trees, the reader is referred to \cite{Jablonski} and \cite{CT}.


A {\it directed graph} is a pair $\mathscr T= (V,\mathcal E)$, where  $V$ is a nonempty set and $\mathcal E$ is a nonempty subset of $V \times V \setminus \{(v,v): v \in V\}$. An element of $V$ (resp. $\mathcal E$) is called a {\it vertex} (resp. an {\it edge}) of $\mathscr T$.  A finite sequence $\{v_i\}_{i=1}^n$ of distinct vertices is said to be a {\it circuit} in $\mathscr T$ if $n \geqslant  2$, $(v_i,v_{i+1}) \in \mathcal E$ for all $1 \leqslant i \leqslant n-1$ and $(v_n,v_1) \in \mathcal E$. 
We say that two distinct vertices $u$ and $v$ of $\mathscr T$ are {\it connected by a path} if there exists a finite sequence $\{v_i\}_{i=1}^n$ of distinct vertices of $\mathscr T$ $(n \geqslant  2)$ such that $v_1=u$, $v_n=v$ and $(v_i,v_{i+1})$ or $(v_{i+1},v_i) \in \mathcal E$ for all $1 \leqslant i \leqslant n-1$.
A directed graph $\mathscr T$ is said to be {\it connected} if any two distinct vertices of $\mathscr T$ can be connected by a path in $\mathscr T.$  For a
subset $W$ of $V$, define
$$\child{W} := \bigcup_{u\in W} \{v\in V
\colon (u,v) \in \mathcal E\}.$$ 
We define inductively $\childn{n}{W}$ \index{$\childn{n}{W}$} for 
$n \in \mathbb N$ as follows: 
\beqn
\childn{n}{W}:= 
\begin{cases} W & ~\mbox{if }~n=0, \\
\child{\childn{n-1}{W}} & ~\mbox{if~} n \geqslant 
1. \end{cases}
\eeqn
Given $v\in V$, we write $\child{v}:=\child{\{v\}}$.
An element of $\child{v}$ is called a {\it child} of $v.$ 
For a given vertex $v \in V,$ consider the set $\mathsf{Par}(v):=\{u \in V : (u, v) \in \mathcal E\}$. 
\index{$\mathsf{Par}(v)$}
If $\mathsf{Par}(v)$ is singleton, then the unique vertex in $\mathsf{Par}(v)$ is called the {\it parent} of $v$, 
\index{$\parent{v}$}
which we denote by $\parent{v}.$
Let the subset $\mathsf{Root}(\mathscr T)$ 
\index{$\mathsf{Root}(\mathscr T)$}
of $V$ be defined as
$$\mathsf{Root}(\mathscr T) := \{v \in V : \mathsf{Par}(v)  = \emptyset\}.$$
Then an element of $\mathsf{Root}(\mathscr T)$ is called a {\it root} of $\mathscr T$. If $\mathsf{Root}(\mathscr T)$ is singleton, then its unique element 
is denoted  by $\mathsf{root}$. 
We set $V^\circ:=V \setminus \mathsf{Root}(\mathscr T)$. \index{$V^{\circ}$}
A directed graph $\mathscr T= (V,\mathcal E)$ is called a {\it directed tree} if 
$\mathscr T$ has no circuits, $\mathscr T$ is connected and
each vertex $v \in V^\circ$ has a unique parent. 
A directed tree $\mathscr T$ is said to be
\begin{enumerate}
\item[(i)] {\it rooted} if it has a unique root.
\item[(ii)] 
{\it locally finite} if $\mbox{card}(\child u)$ is finite for all $u \in V,$ where  $\mbox{card}(X)$ stands for the cardinality of the set $X.$
\item[(iii)]
{\it leafless} if every vertex has at least one child.
\end{enumerate}

Let $\mathscr T=(V, \mathcal E)$ be a rooted directed tree with root $\rootb$. 
Let $V_{\prec}$ be the set $\{u\in V: \mbox{card}(\mathsf{Chi}(u)) \Ge 2\}$ of branching 
vertices of $\mathscr T$. 
For each $u \in V$, the {\it depth} of $u$ is the unique non-negative integer $n_u$ 
such that
$u \in \mathsf{Chi}^{\langle n_u\rangle}(\mathsf{root})$.
Define the {\it branching index} of $\mathscr 
T$ as 
\[k_\mathscr{T}:=\begin{cases}
 1+\sup\{n_w:w\in V_{\prec}\}& \text{if $V_{\prec}$ is non-empty},\\
 0& \text{if $V_{\prec}$ is empty}.
\end{cases}
\]
We say that $\mathscr T$ is of {\it finite branching index} if $k_\mathscr 
T < \infty.$ We further say that two directed trees are {\it isomorphic} if there exists a bijection between their sets of vertices which preserves directed edges.

Let $\mathscr T=(V, \mathcal E)$ be a rooted directed tree with root $\rootb$. We always assume that 
$\mbox{card}(V)=\aleph_0.$
In what follows, $l^2(V)$ stands for the Hilbert
space of square summable complex functions on $V$
equipped with the standard inner product. Note that
the set $\{e_u\}_{u\in V}$ is an
orthonormal basis of $l^2(V)$, where $e_u \in l^2(V)$
is the indicator function of $\{u\}$. Given a system
$\lambdab = \{\lambda_v\}_{v\in V^{\circ}}$ of non-negative real numbers, 
we define the {\em weighted shift operator} $S_{\lambda}$ on ${\mathscr T}$
with weights $\lambdab$ by
   \begin{align*}
   \begin{aligned}
{\mathscr D}(S_{\lambda}) & := \{f \in l^2(V) \colon
\varLambda_{\mathscr T} f \in l^2(V)\},
   \\
S_{\lambda} f & := \varLambda_{\mathscr T} f, \quad f \in {\mathscr
D}(S_{\lambda}),
   \end{aligned}
   \end{align*}
where $\varLambda_{\mathscr T}$ is the mapping defined on
complex functions $f$ on $V$ by
   \begin{align*}
(\varLambda_{\mathscr T} f) (v) :=
   \begin{cases}
\lambda_v \cdot f\big(\parent v\big) & \text{if } v\in
V^\circ,
   \\
   0 & \text{if } v \text{ is a root of } {\mathscr T}.
   \end{cases}
   \end{align*}
\begin{remark}
If $S_{\lambda}$ is bounded then
$S^*_{\lambda}e_u = {\lambda}_ue_{\mathsf{par}(u)}$ if $u \neq \rootb$, and $S^*_{\lambda}e_u=0$ otherwise.
\end{remark}

Throughout these paper, we will be interested in weighted shifts which are bounded linear. 
The reader is referred to \cite{Jablonski} for the basic theory of weighted shifts on directed trees. In particular, 
it may be concluded from \cite[Proposition 
3.5.1(ii)]{Jablonski} that for a rooted directed tree $\mathscr T$ with root $\mathsf{root}$, the kernel of $S^*_{\lambda}$ is given by
\beq \label{kernel} 
E=\langle e_{\mathsf{root}}\rangle \oplus \bigoplus_{v \in 
V_{\prec}}\left(l^2(\mathsf{Chi}(v)) \ominus \langle \lambdab^v\rangle \right), 
\eeq
where $\lambdab^v : \mathsf{Chi}(v) \rar \mathbb C$ is defined by 
$\lambdab^v(u)=\lambda_u$ and $\langle f \rangle$ denotes the span of 
$\{f\}$.

Recall that a bounded linear operator $T$ on a Hilbert space $\mathcal H$ 
is {\it finitely multicyclic} if there are a finite number of vectors $h_1, \cdots, h_m$
in $\mathcal H$ such that 
\beqn
{\mathcal H} = \bigvee {\{T^k h_1, \cdots, T^k h_m : k \in \mathbb N \}}.
\eeqn
\begin{remark} \label{f-cyclic}
Let $\mathscr T$ be a leafless, locally finite rooted directed tree with
finite branching index and let $S_{\lambda}$ be a bounded weighted shift on $\mathscr T$. It may be concluded from \cite[Proposition 
2.1 and Corollary 2.3]{CT} that $S_{\lambda}$ is finitely cyclic. \end{remark}

\section{Dirichlet Shifts on Directed Trees}

We now introduce the notion of Dirichlet shift on certain directed trees.

\begin{definition}
Let $\mathscr T=(V, \mathcal E)$ be a leafless, locally finite rooted directed tree. 
For a real number $q \Ge 1,$ consider the weighted shift $S_{\lambda, q}$ on $\mathscr T$ with weights given by
\beq \label{wts} 
\lambda_{u, q} = 
 \frac{1}{\sqrt{\mbox{card}(\child{v})}}\sqrt{\frac{n_v+q}{n_v + 1}}~&\mbox{for~}u \in \child{v},~v \in V,
\eeq
where $n_v$ is the depth of $v \in V$ in $\mathscr T.$
We refer to $S_{\lambda, q}$ as the {\it Dirichlet shift on $\mathscr T$}.
\end{definition}

Let $S_{\lambda, q}$ be a Dirichlet shift on $\mathscr T$.
Note that the weights of $S_{\lambda, q}$ can be rewritten as \beqn 
\lambda_{v, q} = \sqrt{\frac{n_v+q-1}{n_v}} \frac{1}{\sqrt{\mbox{card}(\sib{v})}}~\mbox{for~}v \in V^{\circ},
\eeqn 
where, for $u \in V$, the {\it siblings} of $u$ is given by \beqn \mathsf{sib}(u) := \begin{cases} \child{\parent{u}} & \mbox{if~}u \neq \mathsf{root}, \\ 
\emptyset & \mbox{otherwise}.
\end{cases} \eeqn
Note further that
\beqn
 \sup_{v \in V}\sum_{u \in \child{v}} \lambda^2_{u, q} = \sup_{v \in V}\frac{n_v+q}{n_v + 1} =q, \quad \inf_{v \in V}\sum_{u \in \child{v}} \lambda^2_{u, q} = \inf_{v \in V}\frac{n_v+q}{n_v + 1} =1.
\eeqn
It now follows from \cite[Propositions 3.1.8 and 3.6.1]{Jablonski} that $S_{\lambda, q}$ is bounded linear, and left-invertible (that is, $S_{\lambda, q}$ is one-one with closed range). 
Thus the Cauchy dual $S'_{\lambda, q}$ of $S_{\lambda, q}$ given by $S_{\lambda, q}(S^*_{\lambda, q}S_{\lambda, q})^{-1}$ is well-defined \cite{Sh}. It turns out that $S'_{\lambda, q}$ is a  weighted shift $S_{\lambda', q}$ on $\mathscr T$ with weights given by
\beq \label{weights-dual}
\lambda'_{v, q} = \sqrt{\frac{n_v}{n_v + q-1}} \frac{1}{\sqrt{\mbox{card}(\sib{v})}} ~ \mbox{for all}~ v \in V^\circ.
\eeq
\begin{remark} \label{classical-D}
In case $\mathscr T$ is a rooted directed tree without any branching vertex (that is, $\mathscr T$ is isomorphic to $\mathbb N$), $S_{\lambda, q}$ is unitarily equivalent to the classical weighted shift $S_{w, q}$ with weights ${w}:=\Big\{\sqrt{\frac{n+q}{n+1}}\Big\}_{n \in \mathbb N}$ ({\it classical Dirichlet shift}). Note that the weights of the Cauchy dual $S'_{w, q}$ of $S_{w, q}$ are $\Big\{\sqrt{\frac{n+1}{n+q}}\Big\}_{n \in \mathbb N}$ ({\it classical Bergman shift}).
\end{remark}

The following classification problem is motivated by \cite[Theorem 9.9]{ACJS}:
\begin{problem} 
Let $S^{(1)}_{\lambda, q}, S^{(2)}_{\lambda, q}$ be Dirichlet shifts on $\mathscr T_1, \mathscr T_2$ respectively. Find sufficient and necessary conditions on $\mathscr T_1$ and $\mathscr T_2$ which ensure that $S^{(1)}_{\lambda, q}$ and $S^{(2)}_{\lambda, q}$ are unitarily equivalent.
\end{problem}

Note that the above problem has the following solution in case $q=1.$ 
{\it The shifts $S^{(1)}_{\lambda, 1}$ and $S^{(2)}_{\lambda, 1}$ unitarily equivalent if and only if $$\sum_{v \in V^{(1)}_{\prec}}\Big(\mbox{card}(\child{v})-1\Big)=\sum_{v \in V^{(2)}_{\prec} }\Big(\mbox{card}(\child{v})-1\Big),$$
where $V^{(1)}_{\prec}$ and $V^{(2)}_{\prec}$ are set of branching vertices of $\mathscr T_1$ and $\mathscr T_2$ respectively.}
Indeed, since $S^{(1)}_{\lambda, 1}$ and $S^{(2)}_{\lambda, 1}$ are  isometries (Proposition \ref{p-iso} below),
by von Neumann-Wold decomposition \cite{Co},  $S^{(1)}_{\lambda, 1}$ and $S^{(2)}_{\lambda, 1}$ unitarily equivalent if and only if $\dim \ker (S^{(1)}_{\lambda, 2})^*=\dim \ker (S^{(2)}_{\lambda, 2})^*$. The desired conclusion is now immediate from \eqref{kernel}. This has been recorded in more generality in \cite[Theorem 9.9(ii)]{ACJS}.

One of the main results of this note provides a solution to the above problem in case $q$ is a positive integer.
\begin{theorem} \label{solution}
Let $q$ be an integer bigger than $1.$
For $j=1, 2$, let
$S^{(j)}_{\lambda, q}$ be the Dirichlet shift on rooted directed tree $\mathscr T_j= (V_j, \mathcal E_j)$ with $\rootb_j$, let
$\mathcal G^{(j)}_n:=\{v \in V_j : v \in \childn{n}{\rootb_j}\}~(n \in \mathbb N),$ and $E_j=\ker (S^{(j)}_{\lambda, q})^*$. Then 
$S^{(1)}_{\lambda, q}$ is unitarily equivalent to $S^{(2)}_{\lambda, q}$ if and only if
for every $n \in \mathbb N,$ 
\beq \label{constant-g} \sum_{v \in V^{(1)}_{\prec} \cap \mathcal G^{(1)}_n}\Big(\mbox{card}(\child{v})-1\Big)=\sum_{v \in V^{(1)}_{\prec} \cap \mathcal G^{(2)}_n}\Big(\mbox{card}(\child{v})-1\Big).\eeq
\end{theorem}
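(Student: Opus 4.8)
The plan is to reduce the problem to a computation of the reproducing kernel of the Dirichlet space $\mathscr H_q$ built from $S^{(j)}_{\lambda,q}$ via Shimorin's analytic model, and then to read off a complete unitary invariant from that kernel. First I would invoke Shimorin's construction \cite{Sh}: since each $S^{(j)}_{\lambda,q}$ is left-invertible and (by Remark \ref{f-cyclic}) finitely cyclic, the associated operator $\mathscr M^{(j)}_z$ of multiplication by $z$ acts on a reproducing kernel Hilbert space $\mathscr H_q(\mathscr T_j)$ of $E_j$-valued holomorphic functions on $\mathbb D$, and $S^{(j)}_{\lambda,q}\cong\mathscr M^{(j)}_z$. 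A standard fact about such models is that $\mathscr M^{(1)}_z$ and $\mathscr M^{(2)}_z$ are unitarily equivalent if and only if there is a unitary $\tau\colon E_1\to E_2$ intertwining the kernels, i.e.\ $\kappa_{\mathscr H_q(\mathscr T_2)}(z,w)=\tau\,\kappa_{\mathscr H_q(\mathscr T_1)}(z,w)\,\tau^*$ for all $z,w\in\mathbb D$. So the heart of the proof is to make the abstract kernel displayed in the abstract concrete: I would compute, for the Dirichlet shift with weights \eqref{wts}, the moments $\|\mathscr M_z^n f\|^2$ for $f$ in each summand $\langle e_\rootb\rangle$ and $l^2(\child v)\ominus\langle\lambdab^v\rangle$ of $E$ (using the orthogonal decomposition \eqref{kernel}), obtaining that on the root line the $n$-th weight-product gives the coefficient $\frac{(1)_n}{(q)_n}$ and on the branching summand $P_v$ attached to a vertex $v$ of depth $n_v$ it gives $\frac{(n_v+2)_n}{(n_v+q+1)_n}$. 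This is the routine telescoping computation with Pochhammer symbols; the point is that the coefficient depends on the branching vertex $v$ only through its depth $n_v$, and the dimension of the corresponding summand $P_v$ is $\operatorname{card}(\child v)-1$.

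Granting the kernel formula, the unitary-equivalence criterion becomes purely combinatorial. Writing $E_j=\langle e_{\rootb_j}\rangle\oplus\bigoplus_{v\in V^{(j)}_{\prec}}\big(l^2(\child v)\ominus\langle\lambdab^v\rangle\big)$, the kernel $\kappa_{\mathscr H_q(\mathscr T_j)}$ is block-diagonal with one block (coefficient sequence $(\frac{(1)_n}{(q)_n})_n$, multiplicity $1$) on the root summand and, for each $n\in\mathbb N$, a block with coefficient sequence $(\frac{(n+2)_k}{(n+q+1)_k})_k$ and multiplicity $m^{(j)}_n:=\sum_{v\in V^{(j)}_{\prec}\cap\mathcal G^{(j)}_n}(\operatorname{card}(\child v)-1)$. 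Two such block-diagonal kernels are related by $\kappa_2=\tau\kappa_1\tau^*$ for a unitary $\tau$ if and only if, for every distinct scalar coefficient sequence appearing, the corresponding eigenspace multiplicities agree — here one must check that the sequences $(\frac{(1)_k}{(q)_k})_k$ and $(\frac{(n+2)_k}{(n+q+1)_k})_k$, $n\in\mathbb N$, are pairwise distinct (they are, since $q>1$: comparing the $k=1$ term $\frac{1}{q}$ versus $\frac{n+2}{n+q+1}$, or more robustly the large-$k$ decay rate $k^{1-q}$ versus $k^{-(q-1)}$ with differing constants, separates them), so that the multiplicities are individually forced. Hence $\tau$ exists iff $m^{(1)}_n=m^{(2)}_n$ for all $n$, which is exactly \eqref{constant-g}.

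For the forward direction one also argues that the unitary $U\colon\mathscr H_q(\mathscr T_1)\to\mathscr H_q(\mathscr T_2)$ implementing the equivalence, being intertwining for the respective $\mathscr M_z$'s and for their adjoints, restricts to a unitary of the kernels $E_1\to E_2$ at the level of the defect spaces $\ker(\mathscr M^{(j)}_z)^*$ — so the argument is genuinely an equivalence, not just one implication. The one delicate point I anticipate is the technical verification that unitary equivalence of these particular Shimorin models is \emph{equivalent} to (and not merely implied by) a kernel intertwiner: one needs left-invertibility plus the analytic-model uniqueness from \cite{Sh} to know that any unitary equivalence of the operators carries the canonical reproducing kernel to the canonical reproducing kernel up to a constant unitary on the coefficient space. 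Once that is in hand, the rest is the Pochhammer bookkeeping and the elementary separation of the coefficient sequences, and I would present those two lemmas first, then assemble them into the theorem.
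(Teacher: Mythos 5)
Your argument is correct, and it takes a genuinely different route from the paper's proof of the general case. The paper passes to the Cauchy dual $S_{\lambda',q}$, realizes it as multiplication by $z$ on a vector-valued Bergman space $L^2_a(\nu_{\mathscr T})$ with an operator-valued measure on $\mathbb D$ (Lemma \ref{B-norm}), shows via a Stone--Weierstrass/Riesz argument that any unitary intertwiner conjugates $\nu_{\mathscr T_1}$ to $\nu_{\mathscr T_2}$, and then uses the distinctness of the radial densities $w_l$ to force the intertwiner to respect the depth grading of $\ker S^*_{\lambda,q}$. You instead stay in the Dirichlet space and extract the invariant from the Taylor coefficients of the kernel: the "delicate point" you flag is in fact unproblematic for the canonical Shimorin model, since the coefficients are $C_{j,k}=P_E S'^{*j}_{\lambda,q}S^k_{\lambda',q}|_E$ and any unitary $W$ with $WS^{(1)}_{\lambda,q}=S^{(2)}_{\lambda,q}W$ restricts to a unitary $\tau=W|_{E_1}$ conjugating all the $C_{j,k}$, so $\kappa_2=\tau\kappa_1\tau^*$ is forced; conversely such a $\tau$ induces a unitary on kernel functions intertwining the shifts. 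Given the diagonal form of the kernel from Proposition \ref{S-c-a-kernel-dim1}(ii) (note that the vanishing of the off-diagonal $C_{j,k}$, $j\neq k$, is part of what must be proved there, not just the diagonal moments your sketch computes), the condition reduces to matching spectral multiplicities of the single positive operator $A_1=\tfrac1q P_{\langle e_\rootb\rangle}+\sum_{v\in V_\prec}\tfrac{n_v+2}{n_v+q+1}P_v$, and your separation of the eigenvalues ($\tfrac{n+2}{n+q+1}$ strictly increasing in $n$ and never equal to $\tfrac1q$ when $q>1$) is exactly what is needed. Your route is shorter and closer in spirit to the paper's own $q=2$ argument (which reduces to unitary equivalence of the diagonal operators $\oplus_v\alpha_vI_{\beta_v}$ via Olofsson); what the paper's longer route buys is the $L^2_a(\nu_{\mathscr T})$ realization of the Bergman space, which is of independent interest.
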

\begin{remark}
Since $\mathscr T_j$ is locally finite,  the $V^{(j)}_{\prec} \cap \mathcal G^{(j)}_n$ is finite for every $n \in \mathbb N$ and $j=1, 2$, and hence the sums appearing in \eqref{constant-g} are finite.
Further, it may happen that \eqref{constant-g} holds for two non-isomorphic directed trees (see \cite[Figure 2]{ACJS}). 
\end{remark}



The case $q=2$ of Theorem \ref{solution} is a special case of \cite[Theorem 9.9(i)]{ACJS}. In what follows, we provide an alternative verification of this fact based on modeling $S_{\lambda, 2}$ as a multiplication by $z$ on a vector-valued Dirichlet space. With this identification, the problem essentially reduces to classification problem of multiplication operators on vector-valued Dirichlet spaces (refer to Section 3).
This part of the proof relies on the theory of vector-valued Dirichlet spaces as expounded in \cite{O}. The rather involved proof of the general case, as presented in the last section, relies on tree analogs of weighted Bergman spaces. These spaces can be seen as Cauchy dual of Dirichlet spaces in the sense of S. Shimorin \cite{Sh}. 


In the remaining part of this section, we derive some structural properties of the weighted shifts $S_{\lambda, q}$ on $\mathscr T$. 
Before we state formulae for moments of $S_{\lambda, q}$ and $S_{\lambda', q},$ recall that the {\it Pochhammer symbol} is defined by 
\beqn
(x)_y=\frac{\Gamma(x+y)}{\Gamma(x)},
\eeqn
where $\Gamma$ is the gamma function defined for all complex numbers except the non-positive integers.
\begin{lemma} \label{formula-moments}
Let $S_{\lambda, q}$ be a Dirichlet shift on $\mathscr T=(V, \mathcal E)$
and let $S_{\lambda', q}$ be the Cauchy dual of $S_{\lambda, q}.$ Then for $k \in \mathbb N$ and $v \in V,$
\beqn
\|S^k_{\lambda, q} e_v\|^2=\frac{(n_v +q)_k}{(n_v+1)_k}, \quad \|S^k_{\lambda', q} e_v\|^2=\frac{(n_v +1)_k}{(n_v+q)_k}.
\eeqn
In particular, the spectral radii of $S_{\lambda, q}$ and $S_{\lambda', q}$ are $1.$
\end{lemma}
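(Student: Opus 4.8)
The plan is to compute the norms $\|S^k_{\lambda,q}e_v\|^2$ directly from the action of the weighted shift on the orthonormal basis, and then obtain the formula for $S_{\lambda',q}$ either by the same direct computation using the weights \eqref{weights-dual} or, more cleanly, by invoking the general identity relating moments of a left-invertible shift to those of its Cauchy dual. First I would recall that for a weighted shift $S_\lambda$ on a directed tree one has $S_\lambda e_u = \sum_{w \in \child{u}} \lambda_w e_w$, and more generally $S^k_\lambda e_v$ is supported on $\childn{k}{v}$ with $\langle S^k_\lambda e_v, e_w\rangle = \prod_{i} \lambda_{w_i}$ along the unique path $v = w_0 \to w_1 \to \cdots \to w_k = w$; since distinct $w \in \childn{k}{v}$ give orthogonal contributions,
\beqn
\|S^k_{\lambda,q} e_v\|^2 = \sum_{w \in \childn{k}{v}} \prod_{i=1}^{k} \lambda_{w_i, q}^2.
\eeqn
Now the crucial observation is that the weight \eqref{wts} depends on $w_i$ only through the depth $n_{w_{i-1}} = n_v + i - 1$ of its parent and the factor $\frac{1}{\mathrm{card}(\child{w_{i-1}})}$. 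Grouping the product over a fixed path and summing over all $w \in \childn{k}{v}$, the reciprocal-cardinality factors telescope: $\sum_{w \in \childn{k}{v}} \prod_{i=1}^k \frac{1}{\mathrm{card}(\child{w_{i-1}})} = 1$ (this is the statement that each vertex distributes mass $1$ among its children). Hence the depth factors pull out of the sum and
\beqn
\|S^k_{\lambda,q} e_v\|^2 = \prod_{i=1}^{k} \frac{n_v + i - 1 + q}{n_v + i} = \frac{(n_v+q)(n_v+q+1)\cdots(n_v+q+k-1)}{(n_v+1)(n_v+2)\cdots(n_v+k)} = \frac{(n_v+q)_k}{(n_v+1)_k},
\eeqn
using the definition $(x)_k = \Gamma(x+k)/\Gamma(x) = x(x+1)\cdots(x+k-1)$.

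For the Cauchy dual, the identical argument applies verbatim with $\lambda_{v,q}$ replaced by $\lambda'_{v,q}$ from \eqref{weights-dual}: the reciprocal-cardinality factors again telescope to $1$, and the depth factors give $\prod_{i=1}^k \frac{n_v+i-1}{n_v+i-1+q} = \frac{(n_v)_k / n_v \cdot \text{shift}}{\cdots}$; more precisely $\prod_{i=1}^k \frac{n_v+i-1}{n_v+q+i-1} = \frac{(n_v+1)_k}{(n_v+q)_k}$ after reindexing (note $\prod_{i=1}^k (n_v+i-1)$ runs over $n_v, n_v+1, \dots, n_v+k-1$, while $\prod_{i=1}^k(n_v+q+i-1)$ runs over $n_v+q, \dots, n_v+q+k-1$; one checks the ratio equals $\frac{(n_v+1)_k}{(n_v+q)_k}$ by comparing $\Gamma$-quotients and cancelling). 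Alternatively, since $S_{\lambda',q}$ is again a weighted shift on the same tree with weights depending only on depth, one can simply note it is the Dirichlet-type shift obtained by swapping the roles of the "$q$" and "$1$" parameters in the depth factor, so its moments are the reciprocal of those of $S_{\lambda,q}$ — which is exactly the displayed formula.

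Finally, the spectral radius statement follows from the moment formulas: $r(S_{\lambda,q}) = \lim_{k\to\infty} \|S^k_{\lambda,q}\|^{1/k}$, and since $\|S^k_{\lambda,q}e_v\|^2 = \frac{(n_v+q)_k}{(n_v+1)_k} = \frac{\Gamma(n_v+q+k)\Gamma(n_v+1)}{\Gamma(n_v+q)\Gamma(n_v+1+k)}$, Stirling's formula (or the elementary estimate $\frac{(a)_k}{(b)_k} \sim k^{a-b}$ as $k \to \infty$) gives $\|S^k_{\lambda,q}e_v\|^2 \sim C_v\, k^{q-1}$, hence $\|S^k_{\lambda,q}e_v\|^{1/k} \to 1$ for each $v$; together with a uniform bound over $v$ (the ratio $\frac{(n+q)_k}{(n+1)_k}$ is decreasing in $n$ for $q \ge 1$, so it is maximized at $n=0$, giving $\|S^k_{\lambda,q}\| = \sqrt{(q)_k/(1)_k} = \sqrt{(q)_k/k!}$, again $\sim k^{(q-1)/2}$) this yields $r(S_{\lambda,q}) = 1$, and likewise $r(S_{\lambda',q}) = 1$.

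\textbf{Main obstacle.} The only genuinely non-routine point is the telescoping identity $\sum_{w \in \childn{k}{v}} \prod_{i=1}^k \frac{1}{\mathrm{card}(\child{w_{i-1}})} = 1$, i.e. establishing cleanly that the "branching" part of the weights contributes exactly $1$ independently of the tree's structure; once this is isolated the problem collapses to a one-variable telescoping product. This is most transparently proved by induction on $k$: the sum over $\childn{k}{v}$ decomposes as $\sum_{u \in \child{v}} \frac{1}{\mathrm{card}(\child{v})} \sum_{w \in \childn{k-1}{u}} \prod \cdots$, and the inner sum is $1$ by the inductive hypothesis (note the depth factors in the induction must be carried along, so one really inducts on the full statement $\|S^k_{\lambda,q}e_v\|^2 = (n_v+q)_k/(n_v+1)_k$, using that the inductive step at $u \in \child{v}$ has $n_u = n_v+1$). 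I expect the write-up to be short once this bookkeeping is set up correctly.
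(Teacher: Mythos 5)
Your argument is correct and, at bottom, the same computation as the paper's, just organized differently. The paper proves the moment formula by a direct induction on $k$: it writes $\|S^{k+1}_{\lambda,q}e_v\|^2=\sum_{w\in\child{v}}\lambda_{w,q}^2\,\|S^k_{\lambda,q}e_w\|^2$ (using that the vectors $S^k_{\lambda,q}e_w$, $w\in\child{v}$, are mutually orthogonal), then applies the induction hypothesis at depth $n_v+1$ together with $\sum_{w\in\child{v}}\lambda_{w,q}^2=\tfrac{n_v+q}{n_v+1}$; the Cauchy-dual formula is obtained the same way, and the spectral radius claim follows from $\|S^k_{\lambda,q}\|=\sup_n\sqrt{(n+q)_k/(n+1)_k}$ and the spectral radius formula, exactly as you argue. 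You instead expand $S^k_{\lambda,q}e_v$ over paths and isolate the normalization identity $\sum_{w\in\childn{k}{v}}\prod_{i=1}^k\frac{1}{\mbox{card}(\child{w_{i-1}})}=1$; this is precisely identity \eqref{card-ide}, which the paper states and uses only later (in computing the reproducing kernel), not in this lemma. Since, as you note, that identity is most cleanly proved by the very induction that carries the depth factors along, the two proofs are one induction packaged two ways; your packaging has the mild benefit of yielding the explicit expansion of $S^k_{\lambda',q}e_v$ as a by-product, which the paper rederives separately in Proposition \ref{S-c-a-kernel-dim1}. One slip to correct: in the Cauchy-dual case the weights \eqref{weights-dual} are indexed by the \emph{child}, so the depth factor on the edge $w_{i-1}\to w_i$ is $\tfrac{n_{w_i}}{n_{w_i}+q-1}=\tfrac{n_v+i}{n_v+i+q-1}$ and the product is $\prod_{i=1}^k\tfrac{n_v+i}{n_v+q+i-1}=\tfrac{(n_v+1)_k}{(n_v+q)_k}$ directly; the product $\prod_{i=1}^k\tfrac{n_v+i-1}{n_v+q+i-1}$ as you wrote it equals $\tfrac{(n_v)_k}{(n_v+q)_k}$, not the stated value. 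This is an indexing typo, not a gap.
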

\begin{proof}
We verify the first formula by induction on integers $k \Ge 0$ for a fixed $v \in V$. The formula is trivial for $k=0.$ Suppose the formula holds for some integer $k \Ge 0.$ 
Since $\childn{n}{u}$ and $\childn{n}{w}$ are disjoint for distinct vertices $u$ and $w,$ it follows from \cite[Lemma 6.1.1(i)]{Jablonski} that
$\{S^k_{\lambda, q}e_w\}_{w \in \child{v}}$ is mutually orthogonal. Also, since $n_w=n_v+1$ for $w \in \child{v}$, we obtain
\beqn
\|S^{k+1}_{\lambda, q} e_v\|^2 &=& \Big\|S^k_{\lambda, q} \sum_{w \in \child{v}} \lambda_{w, q}e_w \Big\|^2  = \sum_{w \in \child{v}} \lambda^2_{w, q} \|S^k_{\lambda, q} e_w \|^2 \\ &\overset{\eqref{wts}}=& \sum_{w \in \child{v}}\frac{1}{{\mbox{card}(\child{v})}}{\frac{n_v+q}{n_v + 1}}\,\frac{(n_w +q)_k}{(n_w+1)_k} \\
&=& {\frac{n_v+q}{n_v + 1}}\,\frac{(n_v +q+1)_k}{(n_v+2)_k} 
= \frac{(n_v +q)_{k+1}}{(n_v+1)_{k+1}}.
\eeqn
The second formula can be verified similarly. To see the remaining part, note that 
\beqn
\|S^k_{\lambda, q}\|=\sup_{n \in \mathbb N}\sqrt{\frac{(n +q)_k}{(n+1)_k}}, \quad \|S^k_{\lambda', q}\|=\sup_{n \in \mathbb N} \sqrt{\frac{(n +1)_k}{(n+q)_k}},
\eeqn
and apply the spectral radius formula.
\end{proof}


The second part of the following generalizes \cite[Proposition 8]{At}, \cite[Theorem 8.6]{AK}.
\begin{proposition} \label{p-iso}
Let $S_{\lambda, q}$ be a Dirichlet shift on $\mathscr T=(V, \mathcal E)$
and let $S_{\lambda', q}$ be the Cauchy dual of $S_{\lambda, q}.$
If $q$ is a positive integer, then we have the following: 
\begin{itemize}
\item [(i)] $S_{\lambda', q}$ is subnormal, that is, $S_{\lambda', q}$ admits a normal extension.
\item [(ii)] $S_{\lambda, q}$ is a $q$-isometry, that is,
\beqn
\sum_{k=0}^q (-1)^k {q \choose k} S^{*k}_{\lambda, q} S^k_{\lambda, q}=0,
\eeqn
but not a $(q-1)$-isometry,
\item[(iii)] if $\mathscr T$ is of finite branching index, then the self-commutator $[S^*_{\lambda, q}, S_{\lambda, q}]:=S^*_{\lambda, q}S_{\lambda, q}-S_{\lambda, q}S^*_{\lambda, q}$ of $S_{\lambda, q}$ is of trace-class,
\item[(iv)] $S_{\lambda, 2}$ has wandering subspace property, that is, for any $S_{\lambda, 2}$-invariant subspace $\mathcal M$ of $l^2(V)$,
\beqn
\mathcal M = \bigvee_{k \in \mathbb N}\{S^k_{\lambda, 2}f : f \in \mathcal M \ominus S_{\lambda, 2} \mathcal M\}.
\eeqn
\end{itemize}
\end{proposition}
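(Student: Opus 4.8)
The plan is to reduce every assertion to the moment formulas of Lemma \ref{formula-moments}, exploiting the fact that, although $S_{\lambda,q}$ is not a classical unilateral weighted shift, it behaves like an orthogonal direct sum of such shifts on the subspaces $l^2\big(\childn{n}{v}\big)$ because the weights depend only on the depth. More precisely, for each $v\in V$ the operator $(S^{*k}_{\lambda,q}S^k_{\lambda,q})$ acts on $e_v$ by the scalar $\|S^k_{\lambda,q}e_v\|^2=\frac{(n_v+q)_k}{(n_v+1)_k}$, since the vectors $\{S^k_{\lambda,q}e_w\}$ appearing in the expansion are mutually orthogonal (as used in the proof of Lemma \ref{formula-moments}); hence $S^{*k}_{\lambda,q}S^k_{\lambda,q}$ is the diagonal operator $\mathrm{diag}\big(\tfrac{(n_v+q)_k}{(n_v+1)_k}\big)_{v\in V}$, and likewise $S^{*k}_{\lambda',q}S^k_{\lambda',q}=\mathrm{diag}\big(\tfrac{(n_v+1)_k}{(n_v+q)_k}\big)_{v\in V}$. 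All four parts then become statements about the scalar sequences $a_n^{(v)}:=\frac{(n_v+q)_n}{(n_v+1)_n}$ and their reciprocals.

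For (i), fix $v$ and observe that $\frac{(n_v+1)_n}{(n_v+q)_n}=\prod_{j=0}^{n-1}\frac{n_v+1+j}{n_v+q+j}$ is, for integer $q$, a ratio of products which can be written as a Hausdorff moment sequence: indeed $\frac{(m)_n}{(m+q-1)_n}=\frac{\Gamma(m+q-1)}{\Gamma(m)\Gamma(q-1)}\int_0^1 t^{m+n-1}(1-t)^{q-2}\,dt$ when $q\ge 2$ (and it is trivially $1$ when $q=1$), so $\{\|S^k_{\lambda',q}e_v\|^2\}_k$ is a Stieltjes moment sequence on $[0,1]$. By the Berger--Gellar--Wallen criterion each restriction of $S_{\lambda',q}$ to the invariant subspace $\bigvee_k S^k_{\lambda',q}e_v$ (a classical weighted shift, or a finite-multiplicity analog) is subnormal; since $S_{\lambda',q}$ is the orthogonal sum over a generating set of such pieces — concretely over $v$ ranging through $\rootb$ together with the $l^2(\mathsf{Chi}(w))\ominus\langle\lambdab^w\rangle$ summands of \eqref{kernel}, using the directed-tree structure of Lemma 6.1.1 in \cite{Jablonski} — subnormality of the whole operator follows. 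For (ii), the $q$-isometry identity $\sum_{k=0}^q(-1)^k\binom{q}{k}S^{*k}_{\lambda,q}S^k_{\lambda,q}=0$ is equivalent, entrywise, to $\sum_{k=0}^q(-1)^k\binom{q}{k}\frac{(m+q)_k}{(m+1)_k}=0$ for every integer $m\ge 0$; since $\frac{(m+q)_k}{(m+1)_k}=\prod_{j=1}^k\frac{m+q+j-1}{m+j}$ is a polynomial in $k$ of degree exactly $q-1$ (it equals $\binom{m+q+k-1}{k}\big/\binom{m+k}{k}$, a rational function whose numerator and denominator are degree-$(q-1)$ polynomials... more directly, $\frac{(m+q)_k}{(m+1)_k}$ as a function of $k$ can be checked to be a polynomial of degree $q-1$ by induction on $q$), the $q$-th finite difference annihilates it, giving the identity; the failure of the $(q-1)$-isometry property follows because the $(q-1)$-st finite difference of a degree-$(q-1)$ polynomial is a nonzero constant. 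For (iii), the self-commutator is the diagonal operator with entries $\big(\|S_{\lambda,q}e_v\|^2-\sum_{w:\,\parent w=v}\lambda_{w,q}^2\mathbf 1\big)$-type corrections; one computes $\langle[S^*_{\lambda,q},S_{\lambda,q}]e_v,e_v\rangle=\lambda_{v,q}^2-\sum_{w\in\mathsf{Chi}(v)}\lambda_{w,q}^2=\frac{n_v+q-1}{n_v+\mathrm{card}(\mathsf{sib}(v))\cdot n_v}-\cdots$, which depends only on $n_v$ and the branching data at $v$, and which is $O(1/n_v^2)$ for large $n_v$; since for a tree of finite branching index all vertices of depth $\ge k_{\mathscr T}$ are non-branching, the diagonal entries are summable over $V$ (there are finitely many vertices of depth $n$ contributing, and the tail is dominated by a convergent series), so the commutator is trace class. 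For (iv), $S_{\lambda,2}$ is a $2$-isometry by part (ii), and every $2$-isometry (more generally every concave operator) has the wandering subspace property by Richter's theorem \cite{Sh}; alternatively this will fall out of the concrete Dirichlet-space model of Section 3, where $S_{\lambda,2}$ becomes multiplication by $z$ and Richter's analytic-model results apply directly.

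The main obstacle is part (iii): one must identify the diagonal entries of the self-commutator precisely enough to see both that they are nonnegative (or at least controlled in sign) and, crucially, that they decay fast enough in the depth to be summable over the — possibly exponentially growing at each level, but only finitely often branching — vertex set; this requires combining the depth-decay $\frac{n+q-1}{n+1}-\frac{n+q}{n+2}=O(n^{-2})$ with the finite-branching-index hypothesis to bound $\sum_{v\in V}$ of these entries, handling the finitely many branching levels separately from the eventually-linear tail. The subnormality argument in (i) is the second delicate point, since one must correctly assemble the directed-tree decomposition \eqref{kernel} into an orthogonal sum of classical weighted shifts with the stated Stieltjes moment sequences; the integral representation of $\frac{(m+1)_n}{(m+q)_n}$ is the key computational input.
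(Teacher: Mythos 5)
Parts (ii) and (iv) of your proposal coincide with the paper's argument (diagonality of $S^{*k}_{\lambda,q}S^k_{\lambda,q}$ plus the observation that $k\mapsto\frac{(n_v+q)_k}{(n_v+1)_k}$ is a polynomial of degree exactly $q-1$, and then Richter/Shimorin for concave operators) and are fine. The two places where you deviate are precisely the two you flag as delicate, and both have genuine problems. In (i), your Beta-integral identity $\frac{(m)_n}{(m+q-1)_n}=\frac{\Gamma(m+q-1)}{\Gamma(m)\Gamma(q-1)}\int_0^1 t^{m+n-1}(1-t)^{q-2}\,dt$ is correct and is a pleasant explicit substitute for the paper's appeal to products of Hausdorff moment sequences. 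But the assembly step is a gap: $S_{\lambda',q}$ is not \emph{visibly} an orthogonal direct sum of classical weighted shifts. To decompose $l^2(V)$ into the cyclic subspaces $\bigvee_k S^k_{\lambda',q}g$ with $g$ running over a basis of $E$ adapted to \eqref{kernel}, you need (a) mutual orthogonality of distinct cyclic pieces, which requires the mixed-moment vanishing $\langle S^j_{\lambda',q}g, S^k_{\lambda',q}h\rangle=0$ for $j\neq k$ --- this is exactly \eqref{orthog}, proved only in Section 3 and not at your disposal here --- and (b) that these pieces span $l^2(V)$, a wandering-subspace statement for the Cauchy dual that also needs justification. The paper sidesteps both by invoking \cite[Theorem 6.1.3]{Jablonski}, the directed-tree analogue of the Berger--Gellar--Wallen criterion, which reduces subnormality of a weighted shift on a tree directly to the Hausdorff moment property of the sequences $\{\|S^k_{\lambda',q}e_v\|^2\}_{k}$, with no decomposition required. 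You should either cite that theorem or actually prove (a) and (b).

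Part (iii) is where your route differs most from the paper and where the argument, as written, fails. The paper does not compute the commutator at all: it notes that $S_{\lambda',q}$ is finitely cyclic and subnormal, applies Berger--Shaw to get a trace-class self-commutator for $S_{\lambda',q}$, and transfers this to $S_{\lambda,q}$ via the algebraic identity $[A^*,A]A=-A^*A([A'^*,A']A)A^*A$ together with right essential invertibility of $A=S_{\lambda,q}$. Your direct computation starts from the false premise that $[S^*_{\lambda,q},S_{\lambda,q}]$ is diagonal: $S^*_{\lambda,q}S_{\lambda,q}$ is diagonal, but $S_{\lambda,q}S^*_{\lambda,q}e_v=\lambda_{v,q}\sum_{u\in\sib{v}}\lambda_{u,q}e_u$, so the commutator has off-diagonal blocks indexed by sibling classes (and your displayed formula for the diagonal entry is not a correct expression). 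Absolute summability of the diagonal entries does not imply trace class for a non-positive, non-diagonal operator, so the step as stated does not go through. The approach is salvageable: the off-diagonal blocks live only over vertices $v$ with $\mbox{card}(\sib{v})\geqslant 2$, i.e.\ of depth at most $k_{\mathscr T}$, so that part is finite rank; the remaining tail is genuinely diagonal with entries $\frac{n+q}{n+1}-\frac{n+q-1}{n}=\frac{1-q}{n(n+1)}$ at depth $n$, and the level sizes $\mbox{card}(\childn{n}{\rootb})$ stabilize beyond the branching index, so the tail is trace class. But none of this is in your sketch, and without it the conclusion is not established.
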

\begin{proof}
Suppose that $q$ is a positive integer.

(i) By \cite[Theorem 6.1.3]{Jablonski}, $S_{\lambda', q}$ is subnormal if and only if for every $v \in V,$ $\{\|S^k_{\lambda', q}e_v\|^2\}_{k \in \mathbb N}$ is a Hausdorff moment sequence. However, by Lemma \ref{formula-moments}, 
for $v \in V$ and $k \in \mathbb N.$ 
 \begin{align*}
\|S^k_{\lambda', q} e_v\|^2=\frac{(n_v +1)_k}{(n_v+q)_k}=
   \begin{cases}
1  & \text{if } q=1,
   \\
   \frac{(n_v+1) \cdots (n_v+q-1)}{(n_v+k+1)\cdots (n_v+k+q-1)} & \text{if } ~q \Ge 2.
   \end{cases}
   \end{align*}
Since $\{\frac{1}{k+l}\}_{ k \in \mathbb N}$ is a Hausdorff moment sequence for any integer $l \Ge 1,$ by general theory \cite{BCR}, so is $\{\|S^k_{\lambda', q}e_v\|^2\}_{k \in \mathbb N}$. 

(ii) 
Since the sequence $\{S^k_{\lambda, q}e_v\}_{v \in V}$ is orthogonal, it is sufficient to check that \beqn
\sum_{k=0}^q (-1)^k {q \choose k} \|S^k_{\lambda, q}e_v\|^2=0~\mbox{for every}~v \in V.
\eeqn
However, by Lemma \ref{formula-moments},
for $v \in V$ and $k \in \mathbb N,$ 
 \begin{align*}
\|S^k_{\lambda, q} e_v\|^2=\frac{(n_v +q)_k}{(n_v+1)_k}=
   \begin{cases}
1  & \text{if } q=1,
   \\
   \frac{(n_v+k+1)\cdots (n_v+k+q-1)}{(n_v+1) \cdots (n_v+q-1)} & \text{if } ~q \Ge 2.
   \end{cases}
   \end{align*}
In any case, the sequence  $\{\|S^k_{\lambda, q}e_v\|^2\}_{k \in \mathbb N}$ is a polynomial in $k$ of degree $q-1.$ By \cite[Proof of Lemma 2.5]{CK}, $S_{\lambda, q}$ is a $q$-isometry, but not a $(q-1)$-isometry.

(iii) Assume that $\mathscr T$ is of finite branching index. By Remark \ref{f-cyclic}, $S_{\lambda', q}$ is finitely cyclic.  By part (ii) above,  $S_{\lambda', q}$ is subnormal. Hence, by Berger-Shaw Theorem \cite{Co}, $S_{\lambda', q}$ has trace-class self-commutator.
Since $A:=S_{\lambda, q}$ is right Fredholm with right essential inverse $(A^*A)^{-1}A^*$, the desired conclusion may now be derived from the following identity: $$[A^*,A]A = -A^*A([A'^*,A']A)A^*A,$$ where $A':=A(A^*A)^{-1}$.

(iv) This is immediate from part (ii), \cite[Lemma 3.3]{CT} and \cite[Theorem 1]{R}.
\end{proof}
\begin{remark}
Assume that $\mathscr T$ is of finite branching index.
Since essential spectral picture of a finitely multicyclic, completely non-unitary $q$-isometry $T$ coincides with the unilateral shift of multiplicity $\dim \ker T^*$ \cite{Ag-St}, it may be concluded from the BDF Theorem \cite{BDF} that
$S_{\lambda, q}$ is unitarily equivalent to a compact perturbation of the unilateral shift of multiplicity $\dim \ker S^*_{\lambda, q}.$ 
\end{remark}




The shift operators $S_{\lambda, q}~(q \Ge 1)$ provide {\it new} examples of finitely multicyclic $q$-isometries in the following sense (cf. \cite[Remark 4.5]{J}).
\begin{proposition}
Let $S_{\lambda, q}$ be a Dirichlet shift on a directed tree $\mathscr T=(V, \mathcal E)$ of finite branching index and
let $S_{w, q}$ be the classical Dirichlet shift (see Remark \ref{classical-D}). Then $S_{\lambda, q}$ is unitarily equivalent to any finite orthogonal sum of  $S_{w, q}$ if and only if either $q=1$ or $\mathscr T$ is isomorphic to $\mathbb N.$ 
\end{proposition}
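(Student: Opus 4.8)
The plan is to establish the two implications of the equivalence separately, the forward direction being essentially trivial and the reverse direction requiring an obstruction argument that isolates the role of the branching vertices.

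First I would dispose of the ``if'' direction. If $q=1$, then by Proposition \ref{p-iso} the Dirichlet shift $S_{\lambda,1}$ is an isometry, and a finite orthogonal sum of copies of the classical Dirichlet (unilateral) shift $S_{w,1}$ is also an isometry; by the von Neumann--Wold decomposition two such isometries are unitarily equivalent precisely when the dimensions of their cokernels agree, and one chooses the number of summands to be $\dim\ker S^*_{\lambda,1}$, which is finite by Remark \ref{f-cyclic} together with \eqref{kernel}. If $\mathscr T$ is isomorphic to $\mathbb N$, then $\mathscr T$ has no branching vertices, and by Remark \ref{classical-D} $S_{\lambda,q}$ is unitarily equivalent to $S_{w,q}$ itself (a sum of one copy). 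This settles sufficiency.

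For the ``only if'' direction I would argue by contraposition: assume $q\ge 2$ \emph{and} $\mathscr T$ is not isomorphic to $\mathbb N$, and show $S_{\lambda,q}$ cannot be unitarily equivalent to any finite orthogonal sum $\bigoplus_{i=1}^m S_{w,q}$. The hypothesis that $\mathscr T$ is not $\mathbb N$ means $V_{\prec}\neq\varnothing$, so there is a branching vertex, say at minimal depth $n_0$, with $\operatorname{card}(\mathsf{Chi}(v_0))\ge 2$. The key invariant to exploit is the operator-theoretic content of Lemma \ref{formula-moments}: for the classical shift $S_{w,q}$, the moment sequence $\|S^k_{w,q}e_n\|^2 = (n+q)_k/(n+1)_k$ arises from depths $n$ running over \emph{all} of $\mathbb N$, each appearing with multiplicity one (per summand), whereas for $S_{\lambda,q}$ on $\mathscr T$ the vertex depths appear with multiplicities dictated by $\operatorname{card}(\mathcal G_n)$, the number of vertices at depth $n$, and the presence of a branching vertex forces these multiplicities to be non-constant. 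Concretely, I would compute the operator $\sum_{k=0}^{q}(-1)^k\binom{q}{k}S^{*k}_\bullet S^k_\bullet$ is zero for both (both are $q$-isometries by Proposition \ref{p-iso}(ii)), but the ``covariance'' operator $\Delta_{q-1}(T):=\sum_{k=0}^{q-1}(-1)^k\binom{q-1}{k}T^{*k}T^k$ is a positive operator whose spectral data is a unitary invariant; for $S_{w,q}$ it is diagonal in $\{e_n\}$ with eigenvalues that are an explicit strictly monotone function of the depth $n$, each of multiplicity $m$ in the $m$-fold sum, while for $S_{\lambda,q}$ restricted to the appropriate subspaces the same eigenvalue function appears but with eigenvalue at depth $n$ having multiplicity $\operatorname{card}(\mathcal G_n)$. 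A branching vertex at depth $n_0$ makes $\operatorname{card}(\mathcal G_{n_0+1})>\operatorname{card}(\mathcal G_{n_0})$, so the multiplicity function is non-constant, which is incompatible with the constant multiplicity $m$ coming from the sum of classical shifts. (Equivalently one may phrase this via the trace of $[S^*,S]$ localized to spectral subspaces, using Proposition \ref{p-iso}(iii), or via the reproducing kernel $\kappa_{\mathscr H_q}$ displayed in the abstract, whose projection ranks $P_v$ encode exactly these multiplicities.)

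The main obstacle I anticipate is making the ``multiplicity function'' argument airtight: I must choose a unitary invariant of the operator that genuinely reads off the depth-stratified multiplicities $\operatorname{card}(\mathcal G_n)$, not merely their sum, since two non-isomorphic trees with the same total branching can be unitarily equivalent (Remark after Theorem \ref{solution}). The clean way is to combine the $q$-isometry identity of Proposition \ref{p-iso}(ii) with Lemma \ref{formula-moments}: diagonalize $\Delta_{q-1}(S_{\lambda,q})$ simultaneously over the orthogonal family $\{e_v\}$, observe its eigenvalue at a basis vector of depth $n$ equals an injective (in $n$) expression $p(n)$ with eigenspace dimension $\operatorname{card}(\mathcal G_n)$, do the same for $\bigoplus_1^m S_{w,q}$ getting eigenvalue $p(n)$ with dimension exactly $m$ for every $n$, and conclude that unitary equivalence forces $\operatorname{card}(\mathcal G_n)=m$ for all $n$, i.e. $\mathscr T$ has no branching vertex, contradicting the assumption. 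This reduces the whole problem to the elementary verification that $k\mapsto\|S^k_{\lambda,q}e_v\|^2$ is a polynomial of degree $q-1$ whose leading (or any distinguishing) coefficient is a strictly monotone function of $n_v$ — a routine computation with Pochhammer symbols that I would not spell out in full.
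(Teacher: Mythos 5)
Your sufficiency argument is the same as the paper's (Wold decomposition for $q=1$; the trivial identification for $\mathscr T\cong\mathbb N$), but your necessity argument takes a genuinely different and valid route. The paper passes to the Cauchy dual $S_{\lambda',q}$ and uses the external fact (from \cite[Example 2.7]{CK}) that the defect operator $\mathcal D_{T}=\sum_{k=0}^{q}(-1)^{k}\binom{q}{k}T^{k}T^{*k}$ of the classical Bergman shift is the orthogonal projection onto its cokernel; it then produces one explicit vector $f_v$, supported on the grandchildren of a branching vertex $v$ whose children are non-branching (this is where finite branching index is used), on which $\mathcal D_{S_{\lambda',q}}$ acts as the scalar $1-q\frac{n_v+2}{n_v+q+1}$ while $f_v\perp\ker S_{\lambda',q}^*$, forcing $q=1$. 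You instead stay with $S_{\lambda,q}$ itself and exploit that $S_{\lambda,q}^{*k}S_{\lambda,q}^k$ is diagonal in $\{e_v\}$ with entry $\frac{(n_v+q)_k}{(n_v+1)_k}$ (Lemma \ref{formula-moments}), so that $\Delta_{q-1}=\sum_{k=0}^{q-1}(-1)^k\binom{q-1}{k}S^{*k}S^k$ has eigenvalue $p(n)=(-1)^{q-1}(q-1)!/(n+1)_{q-1}$ on $\ell^2(\mathcal G_n)$; since $p$ is injective for $q\Ge 2$, comparing spectral multiplicities with those of $\bigoplus_1^m S_{w,q}$ forces $\mbox{card}(\mathcal G_n)=m$ for all $n$, hence $m=1$ (depth $0$) and no branching. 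Your route is arguably stronger — it recovers the entire depth-multiplicity sequence $\{\mbox{card}(\mathcal G_n)\}_n$ as a unitary invariant, needs neither the finite-branching-index hypothesis nor the Cauchy dual/subnormality machinery, and is self-contained once the (genuinely routine) identity that the $(q-1)$-st difference of $k\mapsto\frac{(n+q)_k}{(n+1)_k}$ equals $(-1)^{q-1}(q-1)!/(n+1)_{q-1}$ is written out. Two small corrections: $\Delta_{q-1}$ is not positive for even $q$ (its sign is $(-1)^{q-1}$), though only its self-adjointness and eigenvalue multiplicities matter; and you should say explicitly that the eigenvalue injectivity fails exactly at $q=1$ (where $p\equiv 1$), which is why that case must be excluded.
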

\begin{proof}  Note that $l=\dim \ker S^*_{\lambda, q}$ is finite by \cite[Proposition 2.1]{CT}.
If $\mathscr T$ is isomorphic to $\mathbb N$ then clearly $S_{\lambda, q}$ is unitarily equivalent to $S_{w, q}$. 
Further, if $q=1$ then by the von Neumann-Wold decomposition for isometries \cite{Co}, $S_{\lambda, q}$ is unitarily equivalent to orthogonal sum of $\dim \ker S^*_{\lambda, q}$ copies of $S_{w, q}$. 
This gives the sufficiency part.
To see the necessity part, suppose that
$S_{\lambda, q}$ is unitarily equivalent to orthogonal sum $S^{(l)}_{w, q}$ of $l$ copies of $S_{w, q}$.  Note that the Cauchy dual $S_{\lambda', q}$ of $S_{\lambda, q}$ is unitarily equivalent to $(S'_{w, q})^{(l)}$. By \cite[Example 2.7]{CK}, the defect operator $\mathcal D_{S'_{w, q}}$ is an orthogonal projection onto $\ker S'^{*}_{w, q}$, where, for a bounded linear operator $T$, $\mathcal D_T$ is given by
\beqn
\mathcal D_{T}:=\sum_{k=0}^{q} (-1)^{k} {q \choose k} T^{k} T^{*k}
\eeqn  
The essential part of the proof shows that the defect operator $\mathcal D_{S_{\lambda', q}}$ is never an orthogonal projection unless $q=1$ or $\mathscr T$ is isomorphic to $\mathbb N.$ 

We may assume that $\mathscr T$ is not isomorphic to $\mathbb N.$ Thus 
 $V_\prec$ is nonempty. Further, since $\mathscr T$ is locally finite with finite branching index, we can choose $v \in V_\prec$ such that $\mathsf{Chi}(v) = \{u_1,u_2, \cdots, u_m \}$ and $\mbox{card}(\mathsf{Chi}(u_j))=1~(j=1, \cdots, m)$ for some positive integer $m \Ge 2$. Let $\mathsf{Chi}(u_j)= \{w_j\}~(j=1, \cdots, m)$ and $f_v = \displaystyle \sum_{j=1}^{m} f(w_{j}) e_{w_{j}}$ be such that $$\sum_{j=1}^{m} f(w_{j}) = 0,~\mbox{and~} f(w_{j}) \neq 0~\mbox{for some~} j=1, \cdots, m.$$ Note that $S^*_{\lambda', q} (f_v) \neq 0$ and $S^{*k}_{\lambda', q} (f_v) =0$ for all integers $k > 1$.  It then follows that
\begin{eqnarray*}
\sum_{k=0}^{q} (-1)^{k} {q \choose k} S_{\lambda', q}^{k} S_{\lambda', q}^{*k}(f_v) & = &  f_v - q \sum_{j=1}^m f(w_{j})\lambda'_{w_{j}} e_{w_{j}} \\
& \overset{\eqref{weights-dual}}= & f_v - q \sum_{j=1}^n f(w_{j})\frac{n_{w_{j} }}{n_{w_{j}}+q-1} \, e_{w_{j} } \\
& = & \left( 1- q\big( \frac{n_{v}+2} {n_{v}+q+1} \big) \right)f_v. 
\end{eqnarray*}
It is easy to see from \eqref{kernel} that $f_v$ is orthogonal to $ \ker(S^*_{\lambda', q})$. Thus if $D_{S_{\lambda', q}}$ is the orthogonal projection onto $\ker(S^*_{\lambda', q})$ then we must have $$\left( 1- q\big( \frac{n_{v}+2} {n_{v}+q+1} \big) \right)f_v=0,$$ that is,
 $ (1 -q) (n_{v}+1) = 0 .$ This is possible only if $q=1.$
\end{proof}

\section{Dirichlet Spaces Associated with Directed Trees}

The following result enables us to associate a Dirichlet space with every leafless, locally finite rooted directed tree. It is worth mentioning 
that certain Hardy-type spaces are associated with some infinite acyclic, undirected, connected graphs in \cite{AV} (refer also to \cite[Section 4.3]{ARSW} for a version of Dirichlet space on the Bergman tree).
\begin{proposition} \label{S-c-a-kernel-dim1}
Let $S_{\lambda, q}$ be a Dirichlet shift on $\mathscr T=(V, \mathcal E)$ and let $E:=\ker S^*_{\lambda, q}$. 
Then there exist a $z$-invariant reproducing kernel Hilbert space 
$\mathscr H_q$ of $E$-valued holomorphic functions defined on the disc
$\mathbb{D}$ and a unitary mapping $U:l^2(V) \longrightarrow\mathscr H_q$ such
that ${\mathscr M}_{z, q}U=US_{\lambda, q},$ where 
${\mathscr M}_{z, q}$ 
denotes the operator of multiplication by $z$ on $\mathscr H_q$.
Further, $U$ maps $E$ onto the subspace $\mathscr E$
of $E$-valued constant functions in $\mathscr H_q$ such that $Ug=g$ for every $g \in E.$ 
Furthermore, we have the following:
\begin{itemize}
\item [(i)] the reproducing kernel $\kappa_{\mathscr H_q} : \mathbb D 
\times \mathbb D \rar B(E)$ associated with 
$\mathscr H_q$ satisfies $\kappa_{\mathscr H_q}(\cdot,w)g \in \mathscr H_q$ and
$
\inp{Uf}{\kappa_{\mathscr H_q}(\cdot,w)g}_{\mathscr H_q} = \inp{(Uf)(w)}{g}_E$
for every $f \in l^2(V)$ and $g \in E,$
\item [(ii)] $\kappa_{\mathscr H_q}$ is given by
 \beqn
\kappa_{\mathscr H_q}(z, w) &=& \sum_{n=0}^{\infty}\frac{(1)_n}{(q)_n}~ {z^n \overline{w}^n} ~P_{\langle e_\rootb \rangle} \\  &+& 
\sum_{v \in V_{\prec}} \sum_{n=0}^{\infty}  \frac{(n_v +2)_n}{(n_v + q+1)_n} {z^n \overline{w}^n}~P_{l^2(\child{v}) \ominus \langle \lambdab^v \rangle}~(z, w \in \mathbb D),
\eeqn
where $P_{\mathscr M}$ denotes the orthogonal projection of $\mathscr H$ onto the subspace $\mathscr M$ of $\mathscr H$,
 \item [(iii)] The $E$-valued polynomials in $z$ are dense in $\mathscr H_q$. In fact,
$$\mathscr H_q=\bigvee\{z^nf:f\in {\mathscr E},\ n \in \mathbb N\}.$$
\item [(iv)] If $\mathscr B$ is an orthonormal basis of $\mathscr E$ then $\{z^nf : f \in \mathscr B, ~n \in \mathbb N\}$ forms an orthogonal basis of $\mathscr H_q.$
 \end{itemize}
\end{proposition}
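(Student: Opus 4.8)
The plan is to build the reproducing kernel Hilbert space $\mathscr H_q$ directly from the moment data of $S_{\lambda,q}$ via Shimorin's analytic model \cite{Sh}, using the structural decomposition \eqref{kernel} of $E=\ker S^*_{\lambda,q}$. First I would recall that, since $S_{\lambda,q}$ is left-invertible with spectral radius $1$ (Lemma \ref{formula-moments}), Shimorin's construction applies and yields a unitary $U\colon l^2(V)\to\mathscr H_q$ onto a $z$-invariant RKHS of $E$-valued holomorphic functions on $\mathbb D$, intertwining $S_{\lambda,q}$ with $\mathscr M_{z,q}$, and sending $E$ onto the constant functions $\mathscr E$ with $Ug=g$. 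Concretely, $U$ is given on the dense set of "polynomial" vectors by $U(\sum_k S_{\lambda,q}^k g_k)(z)=\sum_k z^k g_k$ for $g_k\in E$; statement (i) is just the defining reproducing property of the kernel associated with this model, so it is immediate from the general theory once $U$ is in place.

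For (ii), the key observation is that the whole setup \emph{splits orthogonally} along the decomposition \eqref{kernel}: $E=\langle e_{\rootb}\rangle\oplus\bigoplus_{v\in V_{\prec}}\bigl(l^2(\child v)\ominus\langle\lambdab^v\rangle\bigr)$, and each summand is a reducing-type piece for the moment computation because $\{S_{\lambda,q}^k e_w\}_{w\in V}$ is an orthogonal family (used already in Proposition \ref{p-iso}) and, more precisely, because the weights $\lambda_{u,q}$ depend only on the depth $n_v$ of the parent. I would show that for $g$ in the summand attached to a branching vertex $v$ (so $g$ is supported on $\child v$, all of whose vertices have depth $n_v+1$), one has $\|S_{\lambda,q}^k g\|^2=\dfrac{(n_v+2)_k}{(n_v+q+1)_k}\|g\|^2$, and for $g=e_{\rootb}$ (depth $0$) one gets $\|S_{\lambda,q}^k e_{\rootb}\|^2=\dfrac{(1)_k}{(q)_k}$; both follow from Lemma \ref{formula-moments} applied with $n_v$ replaced by $n_v+1$ respectively $0$. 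A generic element of $\mathscr H_q$ then has the form $h(z)=\sum_n z^n g_n$ with $g_n\in E$ and squared norm $\sum_n \|g_n\|^2/\beta_n$ where $\beta_n$ acts blockwise as the displayed ratios; the kernel of such a diagonal weighted space is the stated sum $\sum_n \frac{(1)_n}{(q)_n}z^n\bar w^n P_{\langle e_{\rootb}\rangle}+\sum_{v\in V_{\prec}}\sum_n\frac{(n_v+2)_n}{(n_v+q+1)_n}z^n\bar w^n P_{l^2(\child v)\ominus\langle\lambdab^v\rangle}$, verified by checking the reproducing property against each block, which reduces to the scalar identity $\sum_n \beta_n^{-1}z^n\bar w^n$ being the reproducing kernel for $\ell^2(\beta)$ on each one-dimensional-weight block.

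Parts (iii) and (iv) I would get as byproducts of the construction. Density of $E$-valued polynomials and the identity $\mathscr H_q=\bigvee\{z^n f: f\in\mathscr E,\ n\in\mathbb N\}$ follow because $U$ is unitary and $l^2(V)=\bigvee\{S_{\lambda,q}^k g: g\in E,\ k\in\mathbb N\}$ — this last fact is exactly the statement that $S_{\lambda,q}$ has the wandering subspace property with wandering subspace $E=\ker S^*_{\lambda,q}$; for $q=2$ it is Proposition \ref{p-iso}(iv), and in general it holds for any left-invertible $S_{\lambda,q}$ with spectral radius $1$ because it is built into Shimorin's model (the analytic model space is by definition the closure of the polynomials), so one does \emph{not} need the stronger wandering subspace property of the operator itself here — only that the model is the polynomial closure. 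Finally (iv): once we know $\mathscr H_q$ is the closure of $\{z^n g: g\in E\}$ and that $\langle z^n g, z^m h\rangle_{\mathscr H_q}=\langle S_{\lambda,q}^n g, S_{\lambda,q}^m h\rangle_{l^2(V)}=\delta_{nm}\beta_n^{-1}\langle g,h\rangle$ with $\beta_n$ scalar on each block (because $\mathscr B\subseteq\mathscr E$ refines the block decomposition — each basis vector lies in a single summand of \eqref{kernel}), orthogonality of $\{z^n f: f\in\mathscr B,\ n\in\mathbb N\}$ is immediate, and completeness is the density statement.

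The main obstacle, and the step I would spend the most care on, is pinning down that the moment/norm computation is genuinely \emph{block-diagonal} with respect to \eqref{kernel}: one must check that for $g$ supported on $\child v$ with $\sum_{u\in\child v}\lambda_{u,q}\,g(u)=0$ (i.e.\ $g\perp\lambdab^v$), the vector $S_{\lambda,q}g$ is again supported on $\childn 2 v$ and its norm is controlled purely by the depth $n_v$, and that distinct summands stay orthogonal under all powers of $S_{\lambda,q}$ — this uses that $\childn k u\cap\childn k w=\varnothing$ for $u\ne w$ in the same generation (Jabło\'nski, Lemma 6.1.1) together with the depth-homogeneity of the weights, and it is what forces the kernel to be a \emph{direct sum} of scalar kernels twisted by the projections $P_{\langle e_{\rootb}\rangle}$ and $P_{l^2(\child v)\ominus\langle\lambdab^v\rangle}$ rather than something with off-diagonal terms. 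Everything else is either the formal machinery of Shimorin's model or the one-variable identity $\kappa(z,w)=\sum_n\beta_n^{-1}z^n\bar w^n$ for a weighted sequence space.
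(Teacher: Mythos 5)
Your overall route is the same as the paper's: invoke the Shimorin-type analytic model (the paper cites \cite[Theorem~2.2]{CT} for the first half and parts (i), (iii)), and then reduce (ii) and (iv) to showing that the moment bilinear form $\langle S^j g, S^k h\rangle$ is block-diagonal with respect to the decomposition \eqref{kernel} of $E$, using disjointness of descendant sets and depth-homogeneity of the weights. The only cosmetic difference is that the paper computes the kernel coefficients as $C_{j,k}=P_E S_{\lambda',q}^{*j}S_{\lambda',q}^{k}|_E$ (moments of the \emph{Cauchy dual}), whereas you invert the diagonal norms $\|z^n g\|^2=\|S_{\lambda,q}^n g\|^2$; these are equivalent. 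However, there are three concrete problems. First, your displayed moment formulas are reciprocated: Lemma~\ref{formula-moments} gives $\|S^k_{\lambda,q}e_u\|^2=\frac{(n_u+q)_k}{(n_u+1)_k}$, so for $g$ supported on $\child v$ one gets $\|S^k_{\lambda,q}g\|^2=\frac{(n_v+q+1)_k}{(n_v+2)_k}\|g\|^2\ \Ge\ \|g\|^2$ and $\|S^k_{\lambda,q}e_{\rootb}\|^2=\frac{(q)_k}{(1)_k}$; what you wrote are the moments of the Cauchy dual $S_{\lambda',q}$. Your subsequent norm formula $\sum_n\|g_n\|^2/\beta_n$ and the final kernel are correct, but they are inconsistent with the moment display two lines earlier, so as written the computation does not hang together.

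Second, your justification of (iii) — that density of $E$-valued polynomials is ``built into Shimorin's model by definition'' — is not right. The model space is the unitary image of all of $l^2(V)$, and the polynomials are dense in it precisely when $l^2(V)=\bigvee_{n}S_{\lambda,q}^nE$; this is a theorem about left-invertible weighted shifts on \emph{rooted} directed trees (it is part of \cite[Theorem~2.2]{CT}, which is what the paper cites), not a definitional tautology. Third, your proof of (iv) rests on the parenthetical claim that every orthonormal basis $\mathscr B$ of $\mathscr E$ ``refines the block decomposition,'' i.e.\ that each basis vector lies in a single summand of \eqref{kernel}. That is false for a general orthonormal basis (e.g.\ $\tfrac{1}{\sqrt2}(e_{\rootb}\pm b_v)$), and since $S_{\lambda,q}^{*n}S_{\lambda,q}^n|_E$ acts by \emph{different} scalars on different blocks when $q>1$, vectors $z^nf$, $z^ng$ with $f\perp g$ non-adapted need not be orthogonal. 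So your argument only establishes (iv) for block-adapted bases; you should either restrict to such a basis or explain why the general case reduces to it (the paper's own one-line derivation of (iv) from the orthogonality of the subspaces $z^n\mathscr E$ is terse on exactly the same point).
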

\begin{remark}
Note that the reproducing kernel Hilbert space $\mathscr H_1$ is nothing but the vector-valued Hardy space associated with the kernel $\frac{I_E}{1-z\overline{w}}~(z, w \in \mathbb D),$ where $I_E$ denotes the identity operator on $E.$ In view of the decomposition \eqref{kernel} of $E,$
this is immediate from the result above.
\end{remark}

\begin{proof}
The proof is an application of \cite[Theorem 2.2]{CT}.
The first half and parts (i), (iii) follow from \cite[Theorem 2.2]{CT}. 
Thus we only need to verify parts (ii) and (iv).
Note first that by \cite[Theorem 2.2]{CT}(ii) and Lemma \ref{formula-moments}, $\kappa_{\mathscr H_q}$ is given by
 \begin{equation} \label{eq2}
\kappa_{\mathscr H_q}(z,w)=I_E+\displaystyle\sum_{{j,k \Ge 1}}C_{j,k}z^j\overline{w}^k~(z, w \in \mathbb D),
 \end{equation}
 where $I_E$ denotes the identity operator on $E$, and $C_{j,k}$ are bounded 
linear operators on $E$ given by 
 $$C_{j,k}=P_ES^{*j}_{\lambda', q}S_{\lambda', q}^k|_E~(j, k =1, 2, \cdots)$$ with 
$P_E$ 
being the orthogonal projection of $l^2(V)$ onto $E$. 
To see that $C_{j, k}=0$ for $j \neq k$, we need the following identity \cite[(5.6)]{CPT}:
\beq \label{card-ide}
\sum_{u \in \childn{k}{v}} \prod_{l=0}^{k-1} \frac{1}{s_{l, u}} = 1~\mbox{for~}v \in V~\mbox{and~}k \Ge 1,
\eeq
where, for a positive integer $l$ and $v \in V,$ $s_{l, v}:=\mbox{card} (\sib{\parentn{l}{v}})$.
This identity for a fixed $v \in V$ can be verified by induction on integers $k \Ge 1$. 
Now for $v \in V$ and $j,k \Ge 1$, 
\beqn \label{power-S-lambda}
S^k_{\lambda', q} e_v = \displaystyle \sqrt{\frac{(n_v +1)_k}{(n_v+q)_k}} \sum_{u \in \childn{k}{v}}  \prod_{l=0}^{k-1} \frac{1}{\sqrt{s_{l, u}}} e_u,
\eeqn
\beqn 
S^{*j}_{\lambda', q} e_v= 
\begin{cases} \displaystyle \sqrt{\frac{(n_v +q)_{-j} }{(n_v+1)_{-j}}} ~\prod_{l=0}^{j-1} \frac{1}{\sqrt{s_{l, v}}} e_{\parentn{j}{v}}~&\mbox{if~}n_v \Ge j, \\
0~& \mbox{otherwise}.
\end{cases}  
\eeqn
Let $v \in V$ and $j \Ge k$. Note that if $\parentn{j-k}{v}$ is empty, then
$S^{*j}_{\lambda', q} S^k_{\lambda', q} e_v =0.$ Otherwise
\beqn
S^{*j}_{\lambda', q} S^k_{\lambda', q} e_v &=& \sqrt{\frac{(n_v +1)_k}{(n_v+q)_k}} \sum_{u \in \childn{k}{v}}  \prod_{l=0}^{k-1} \frac{1}{\sqrt{s_{l, u}}} S^{*j}_{\lambda', q}e_u\\
&=& \sqrt{\frac{(n_v +1)_k}{(n_v+q)_k}}  \sum_{u \in \childn{k}{v}} \Big( \prod_{l=0}^{k-1} \frac{1}{\sqrt{s_{l, u}}}\\ & \times &  \sqrt{\frac{(n_u +q)_{-j} }{(n_u+1)_{-j}}}  ~\prod_{m=0}^{j-1} \frac{1}{\sqrt{s_{m, u}}} \Big)e_{\parentn{j}{u}}\\
&=& \sqrt{\frac{(n_v +1)_k}{(n_v+q)_k}}~\sqrt{\frac{(n_v+k +q)_{-j} }{(n_v+k+1)_{-j}}} ~\prod_{l=0}^{j-k-1} \frac{1}{\sqrt{s_{l, v}}}  e_{\parentn{j-k}{v}},
\eeqn
where the last equality follows from \eqref{card-ide}, $n_u=n_v+k$ and $s_{l, v}=s_{l+k, u}.$ This also shows the following: 
\begin{enumerate}
\item[(a)] For $k \in \mathbb N,$ \beq \label{moment} S^{*k}_{\lambda', q} S^k_{\lambda', q} e_v=\frac{(n_v +1)_k}{(n_v+q)_k}\,e_v. \eeq  Here we used the convention that product over the empty set equals $1.$
\item[(b)] For non-negative integers $j > k$, \beq \label{moment-mixed} S^{*j}_{\lambda', q} S^k_{\lambda', q} e_v=\beta(j, k, v) e_{\parentn{j-k}{v}}\eeq for some positive scalar $\beta(j, k, v)$ such that 
\beq \label{beta-con}
\beta(j, k, \omega) =\beta(j, k, v)~\mbox{ for all}~ \omega \in \sib{v}.
\eeq
\end{enumerate}

Let 
$f \in E$. 
Note that by \eqref{kernel}, $f$ takes the form
$f = f_{\rootb} + \sum_{w \in  V_{\prec}} f_w $, 
where $f_{\rootb} = \gamma e_{\mathsf{root}}$ for some $\gamma \in \mathbb C$ and $f_w = \sum_{v \in \child{w}} f(v)e_v$ such that $$\sum_{v \in \child{w}} f(v) \lambda_v = 0~\mbox{for~} w \in V_\prec.$$
In view of \eqref{wts}, $\lambda_v$ is constant on $\child{w}$, and hence we obtain that \beq\label{sum-0}  \sum_{v \in \child{w}} f(v)  = 0~\mbox{for all~} w \in V_\prec. \eeq 
It follows that for $w \in V_\prec$ and $j >k$, 
\beq \label{orthog}
S^{*j}_{\lambda', q} S^k_{\lambda', q}\,f_w= \sum_{v \in \child{w}} f(v) S^{*j}_{\lambda', q} S^k_{\lambda', q}\, e_v  
= 0,
\eeq
where we used \eqref{moment-mixed}, \eqref{beta-con} and \eqref{sum-0}.
It may now be concluded from \eqref{eq2}, \eqref{kernel}, \eqref{moment}  that the reproducing kernel $\kappa_{\mathscr H_q}$ takes the required form. Note that the conclusion in \eqref{orthog} 
also holds for $S_{\lambda, q}$ (by the same reasoning), and hence 
the sequence $\{z^n \mathscr E: n \in \mathbb N\}$ of subspaces of $\mathscr H_q$ is mutually orthogonal. This combined with part (iii) yields (iv).
\end{proof}
\begin{remark}
Note that $\kappa_{\mathscr H_2}$ takes the form
 \beqn
\kappa_{\mathscr H_2}(z, w) &=& -\frac{1}{z\overline{w}}\log(1 - z\overline{w}) ~P_{\langle e_\rootb \rangle} \\  &+& 
\sum_{v \in V_{\prec}} \sum_{n=0}^{\infty}  \frac{n_v +2}{n_v + 2+n} {z^n \overline{w}^n}~P_{l^2(\child{v}) \ominus \langle \lambdab^v \rangle} \\
&=& -\frac{1}{z\overline{w}}\log(1 - z\overline{w}) ~P_{\langle e_\rootb \rangle} \\  &-& 
\sum_{v \in V_{\prec}} \frac{n_v +2}{z^{n_v+1}\overline{w}^{n_v+1}} \left(\frac{1}{z\overline{w}}\log(1 - z\overline{w})+\sum_{k=0}^{n_v} \frac{z^k\overline{w}^k}{k+1}\right)~P_{l^2(\child{v}) \ominus \langle\lambdab^v \rangle}
\eeqn
for $z, w \in \mathbb D \setminus \{0\}.$ A particular case in which $\mathscr T$ has only branching point at $\rootb$, $\kappa_{\mathscr H_2}$ simplifies to
\beqn
\kappa_{\mathscr H_2}(z, w) &=& -\frac{1}{z\overline{w}}\log(1 - z\overline{w}) ~P_{\langle e_\rootb \rangle} \\ &-&
\frac{2}{z\overline{w}} \left(\frac{1}{z\overline{w}}\log(1 - z\overline{w})+1\right)~P_{l^2(\child{\rootb}) \ominus \langle \lambdab^{\rootb} \rangle}
\eeqn
for $z, w \in \mathbb D \setminus \{0\}.$
\end{remark}

Note that in case $\mathscr T$ is a rooted directed tree without any branching vertex (that is, $\mathscr T$ is isomorphic to $\mathbb N$), $\mathscr H_q$ is nothing but the {\it classical Dirichlet space} $D_q~(q \Ge 1)$, that is, the reproducing kernel Hilbert space associated with reproducing kernel $$\sum_{n=0}^{\infty}\frac{(1)_n}{(q)_n}~ {z^n \overline{w}^n}~(z, w \in \mathbb D).$$
(refer to \cite{EKMR} for the basic theory of classical Dirichlet spaces; the reader is also referred to \cite{ARSW} for an interesting exposition on some recent developments related to Dirichlet spaces).  This motivates the following definition.


\begin{definition} 
Let $\mathscr T = (V, \mathcal E)$ be a leafless, locally finite rooted directed tree. 
We refer to the space $\mathscr H_q$, as constructed in Proposition \ref{S-c-a-kernel-dim1}, as the {\it Dirichlet space associated with $\mathscr T$.}
\end{definition}



\begin{corollary} \label{D-norm}
Let $\mathscr H_q$ be a Dirichlet space associated with $\mathscr T=(V, \mathcal E)$ and let $\mathscr M_{z, q}$ be the operator of multiplication by $z$ on $\mathscr H_q.$
Then, for any $f(z)=\sum_{n=0}^{\infty}f_n z^n$ in $\mathscr H_q$, we have 
\beqn
\|f\|^2_{\mathscr H_q} = \sum_{n=0}^{\infty} \Big(|a_{n}|^2\, \frac{(q)_n}{(1)_n} + \sum_{v \in V_{\prec}}\|b_{n, v}\|^2_{l^2(V)}\,\frac{(n_v+q+1)_n}{(n_v+2)_n}\Big),
\eeqn
where $f_n=a_{n}e_{\rootb} + \sum_{v \in V_{\prec}} b_{n, v} \in \ker \mathscr M^*_{z, q}$ is an orthogonal decomposition with $a_{n} \in \mathbb C$ and $b_{n, v} \in l^2(\mathsf{Chi}(v)) \ominus \langle \lambdab^v\rangle$ for every $n \in \mathbb N.$
Thus $\mathscr H_q$ is contained in $\mathscr H_1.$
\end{corollary}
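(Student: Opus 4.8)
The plan is to read off the norm formula directly from the orthogonal basis description in Proposition \ref{S-c-a-kernel-dim1}(iv), and then deduce the inclusion $\mathscr H_q \subseteq \mathscr H_1$ by comparing the two norms coefficient-by-coefficient. First I would fix an orthonormal basis $\mathscr B$ of $\mathscr E$; by Proposition \ref{S-c-a-kernel-dim1}(iv) the family $\{z^n f : f \in \mathscr B,\ n \in \mathbb N\}$ is an orthogonal basis of $\mathscr H_q$, so it suffices to compute $\|z^n f\|^2_{\mathscr H_q}$ for $f \in \mathscr E$. Since $\mathscr M_{z, q} U = U S_{\lambda, q}$ and $U$ is unitary with $U g = g$ on $E$, we have $\|z^n g\|^2_{\mathscr H_q} = \|S^n_{\lambda, q} g\|^2_{l^2(V)}$ for $g \in E$. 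Decomposing $g = \gamma e_{\rootb} + \sum_{v \in V_\prec} g_v$ along \eqref{kernel} (with $g_v \in l^2(\child{v}) \ominus \langle \lambdab^v\rangle$) and using that $\{S^n_{\lambda, q} e_w\}_{w \in V}$ is orthogonal together with Lemma \ref{formula-moments} ($\|S^n_{\lambda, q} e_w\|^2 = (n_w+q)_n/(n_w+1)_n$, which depends only on the depth $n_w$), one gets
\[
\|S^n_{\lambda, q} g\|^2 = |\gamma|^2 \frac{(q)_n}{(1)_n} + \sum_{v \in V_\prec} \|g_v\|^2 \frac{(n_v + q + 1)_n}{(n_v + 2)_n},
\]
because $e_{\rootb}$ has depth $0$ and every vertex in $\child{v}$ has depth $n_v + 1$. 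Note the reciprocal appears relative to the moments of $S_{\lambda,q}$ itself: indeed the reproducing kernel coefficients in Proposition \ref{S-c-a-kernel-dim1}(ii) are $(1)_n/(q)_n$ and $(n_v+2)_n/(n_v+q+1)_n$, so the squared norm of the corresponding basis vector is the reciprocal, consistent with the kernel expansion $\kappa_{\mathscr H_q}(z,w) = \sum \|z^n f\|^{-2} \langle \cdot, z^n f\rangle z^n f$ written basis-wise.

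Next, given $f(z) = \sum_n f_n z^n \in \mathscr H_q$ with $f_n \in \mathscr E$, the orthogonality of $\{z^n \mathscr E\}_{n}$ yields $\|f\|^2_{\mathscr H_q} = \sum_n \|z^n f_n\|^2_{\mathscr H_q}$, and applying the coefficientwise computation above with $f_n = a_n e_{\rootb} + \sum_{v \in V_\prec} b_{n,v}$ gives exactly the stated formula. One small point to record is that the decomposition $f_n = a_n e_{\rootb} + \sum_v b_{n,v}$ is the orthogonal decomposition of $f_n \in \mathscr E \cong E$ along \eqref{kernel}, so $\|f_n\|^2_E = |a_n|^2 + \sum_v \|b_{n,v}\|^2$; this is needed to make sense of the sum as a convergent quantity.

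For the final assertion $\mathscr H_q \subseteq \mathscr H_1$, I would observe that $(1)_n = n!$ so the $\mathscr H_1$-weights are all equal to $1$ (consistent with the Hardy-space remark), i.e. $\|f\|^2_{\mathscr H_1} = \sum_n \big(|a_n|^2 + \sum_{v \in V_\prec} \|b_{n,v}\|^2\big)$. It then remains to check the elementary inequalities $(q)_n/(1)_n \ge 1$ and $(n_v + q + 1)_n/(n_v + 2)_n \ge 1$ for all $n \in \mathbb N$ and all $q \ge 1$: both hold since each factor in the Pochhammer ratio, $(x + q)/(x + 1)$ with $x \ge 0$, is $\ge 1$ when $q \ge 1$ (here $x = j$ resp. $x = n_v + 1 + j$ for $j = 0, \dots, n-1$). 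Hence every term of $\|f\|^2_{\mathscr H_q}$ dominates the corresponding term of $\|f\|^2_{\mathscr H_1}$, so $\|f\|_{\mathscr H_1} \le \|f\|_{\mathscr H_q} < \infty$ and $f \in \mathscr H_1$. I do not anticipate a real obstacle here; the only mildly delicate bookkeeping is confirming that the weight attached to $b_{n,v}$ is genuinely $(n_v+q+1)_n/(n_v+2)_n$ rather than its reciprocal, which is why I would anchor the computation in the identity $\|z^n g\|^2_{\mathscr H_q} = \|S^n_{\lambda,q} g\|^2$ and Lemma \ref{formula-moments} rather than trying to invert the kernel expansion by hand.
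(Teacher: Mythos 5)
Your proposal is correct and follows essentially the same route as the paper: reduce via Proposition \ref{S-c-a-kernel-dim1}(iv) to computing $\|z^n f_n\|^2 = \|S^n_{\lambda,q} f_n\|^2$, evaluate this using the orthogonality of the summands and Lemma \ref{formula-moments} (noting vertices in $\child{v}$ have depth $n_v+1$), and obtain the inclusion $\mathscr H_q \subseteq \mathscr H_1$ from the fact that each Pochhammer weight is at least $1$. The kernel-reciprocal sanity check is a nice touch but not needed.
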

\begin{proof}
To see the first part, in view of Proposition \ref{S-c-a-kernel-dim1}(iv), it suffices to check that for  $f_n=a_{n}e_{\rootb} + \sum_{v \in V_{\prec}} b_{n, v} \in \ker \mathscr M^*_{z, q},$
\beq
\label{formula-mono}
\|z^nf_n\|^2 = |a_{n}|^2\, \frac{(q)_n}{(1)_n} + \sum_{v \in V_{\prec}}\|b_{n, v}\|^2_{l^2(V)}\,\frac{(n_v+q+1)_n}{(n_v+2)_n}.
\eeq
However, by Proposition \ref{S-c-a-kernel-dim1}, $$\|z^nf_n\|^2 = \|S^n_{\lambda, q}f_n\|^2 = |a_n|^2  \|S^n_{\lambda, q}e_{\rootb}\|^2 + \sum_{v \in V_{\prec}} \|S^n_{\lambda, q}b_{n, v}\|^2,$$ and hence \eqref{formula-mono}
is immediate from Lemma \ref{formula-moments}.
The remaining part follows from the inequality $
\sum_{n=0}^{\infty} \Big(|a_{n}|^2 + \sum_{v \in V_{\prec}}\|b_{n, v}\|^2_{l^2(V)}\Big) \Le \|f\|^2_{\mathscr H_q}$ for every $f \in \mathscr H_q.$ 
\end{proof}

Let $\kappa : \mathbb D \times \mathbb D \rar B(E)$ be a positive definite kernel such that $\kappa(z, w)$ is invertible for every $z , w \in \mathbb D$ and let $\mathscr H$ be the reproducing kernel Hilbert space associated with $\kappa.$ Following \cite{A-M}, we say that $\mathscr H$ has {\it complete Pick property} if there exists a positive definite function $F : \mathbb D \times \mathbb D \rar B(E)$ such that
\beqn
I_E - \kappa(z, w)^{-1} = F(z, w)~(z, w \in \mathbb D).
\eeqn 

\begin{corollary}
Let $\mathscr H_q$ be a Dirichlet space associated with $\mathscr T=(V, \mathcal E)$. Then $\mathscr H_q$ has complete Pick property.
\end{corollary}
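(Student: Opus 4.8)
The strategy is to exhibit, for each $z,w \in \mathbb D$, an explicit positive definite $B(E)$-valued function $F$ with $I_E - \kappa_{\mathscr H_q}(z,w)^{-1} = F(z,w)$, by exploiting the block-diagonal structure of $\kappa_{\mathscr H_q}$ recorded in Proposition \ref{S-c-a-kernel-dim1}(ii). First I would observe that $\kappa_{\mathscr H_q}$ decomposes as an orthogonal direct sum over the one-dimensional projection $P_{\langle e_\rootb\rangle}$ and the projections $P_{l^2(\child v)\ominus\langle\lambdab^v\rangle}$ ($v \in V_\prec$), with scalar multipliers $\sum_n \frac{(1)_n}{(q)_n} z^n\overline w^n$ on the first summand and $\sum_n \frac{(n_v+2)_n}{(n_v+q+1)_n} z^n\overline w^n$ on the $v$-th summand. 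Since inversion respects this orthogonal decomposition, it suffices to treat each scalar kernel $k_a(z,w):=\sum_{n=0}^\infty \frac{(a)_n}{(a+q-1)_n} z^n\overline w^n$ separately (here $a=1$ for the root block and $a=n_v+2$ for the branching blocks), and to show that $1 - k_a(z,w)^{-1}$ is a positive definite scalar kernel on $\mathbb D$; then $F$ is the corresponding direct sum of these scalar kernels times the respective projections, which is manifestly positive definite and $B(E)$-valued.

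The core analytic fact needed is therefore: for every integer $a \ge 1$ and every $q \ge 1$ (integer), the function $1 - k_a(z,w)^{-1}$ is positive definite, i.e. $k_a$ is a complete Pick kernel. I would prove this via the standard Kaluza-type criterion: if the Taylor coefficients $c_n := \frac{(a)_n}{(a+q-1)_n}$ of $k_a(z,w)=\sum c_n z^n\overline w^n$ satisfy $c_0=1$, $c_n>0$, and the sequence $c_{n-1}c_{n+1} \le c_n^2$ (log-convexity of the ratios, equivalently $c_n/c_{n+1}$ nondecreasing), then the reciprocal power series $1/\big(\sum c_n t^n\big) = 1 - \sum_{n\ge1} b_n t^n$ has $b_n \ge 0$, which makes $1 - k_a(z,w)^{-1} = \sum_{n\ge 1} b_n z^n\overline w^n$ positive definite. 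Checking $c_{n-1}c_{n+1}\le c_n^2$ amounts to verifying $\frac{(a+n-1)(a+q+n-1)}{(a+n)(a+q-1+n-1)}$ — after cancelling Pochhammer factors — is $\le 1$, i.e. $(a+n-1)(a+q+n-1) \le (a+n)(a+q+n-2)$, which expands to $0 \le q-1$; this holds precisely because $q \ge 1$. (For $q=1$ one has $c_n\equiv 1$, $k_a(z,w)=(1-z\overline w)^{-1}$ is the Szegő kernel, and the conclusion is the classical one.)

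Having established $b_n\ge 0$ for each block, I assemble $F(z,w) := \big(\sum_{n\ge1} b_n^{(\rootb)} z^n\overline w^n\big)P_{\langle e_\rootb\rangle} + \sum_{v\in V_\prec}\big(\sum_{n\ge1} b_n^{(v)} z^n\overline w^n\big) P_{l^2(\child v)\ominus\langle\lambdab^v\rangle}$, note that each summand is positive definite (a nonnegative combination of the rank-one kernels $z^n\overline w^n\cdot P$) and the sum converges in $B(E)$ on compact subsets of $\mathbb D$ since the multipliers are uniformly bounded by $1$ in modulus, and conclude $I_E - \kappa_{\mathscr H_q}(z,w)^{-1} = F(z,w)$ by comparing the action on each orthogonal summand. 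The main obstacle is the reciprocal-power-series positivity step: one must justify the Kaluza criterion (that log-convexity of a positive sequence with $c_0=1$ forces nonnegativity of the reciprocal coefficients) and verify the log-convexity inequality for the specific Pochhammer ratios; both are elementary but require care with the Pochhammer algebra. Everything else — the orthogonal decomposition, the uniform bound, and the reassembly of $F$ — is routine once the scalar case is in hand.
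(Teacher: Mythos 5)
Your proposal follows essentially the same route as the paper: decompose $\kappa_{\mathscr H_q}$ into orthogonal scalar blocks of the form $\sum_{n}\frac{(a)_n}{(a+q-1)_n}z^n\overline{w}^n\,P$ and show each is a complete Pick kernel via the Kaluza-type criterion, which is exactly what the paper does by invoking \cite[Theorem 7.33 and Lemma 7.38]{A-M} and checking the same Pochhammer inequality. One caution: the Kaluza hypothesis is log-\emph{convexity}, $c_n^2\le c_{n-1}c_{n+1}$, not the inequality $c_{n-1}c_{n+1}\le c_n^2$ you state when naming the condition; the displayed inequality $(a+n-1)(a+q+n-1)\le(a+n)(a+q+n-2)$ that you actually verify is equivalent to the correct (log-convex) direction and holds precisely for $q\ge 1$, so only the label needs fixing, not the computation.
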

\begin{proof} By Proposition \ref{S-c-a-kernel-dim1}(ii), $\kappa_{\mathscr H_q}(z, w)$ is orthogonal direct sum of finitely many positive definite kernels of the form
\beqn \kappa_{k, l}(z, w)=\sum_{n=0}^{\infty}  \frac{(k)_n}{(l)_n} {z^n \overline{w}^n}P_{k, l},
\eeqn
where $P_{k, l}$ is a non-zero orthogonal projection and $k, l$ are positive integers such that $l \Ge k$. 
Thus it suffices to check that $1 -\frac{1}{\kappa_{k, l}(z, w)}$ is a positive definite kernel. In view of \cite[Theorem 7.33 and Lemma 7.38]{A-M}, it is enough to verify that
\beqn
\Big(\frac{(k)_n}{(l)_n}\Big)^2 \Le \frac{(k)_{n-1}}{(l)_{n-1}}\frac{(k)_{n+1}}{(l)_{n+1}}~\mbox{for every~}n \in \mathbb N.
\eeqn
However, this is equivalent to ${(l+n)(k+n-1)}{} \Le {(l+n-1)(k+n)}{}~(n \in \mathbb N),$ which is true whenever $l \Ge k$.
\end{proof}

\subsection{The vector-valued Dirichlet space $\mathscr H_2$}

It turns out that $\mathscr H_2$ can be identified with a vector-valued Dirichlet space.
Let us first reproduce from \cite{O} the definition of vector-valued Dirichlet spaces. 

Let $E$ be a Hilbert space and $\mu$ be a positive $B(E)$-valued measure on the unit circle $\mathbb T.$ The {\it Poisson integral} $P[\mu] : \mathbb D \rar B(E)$ of $\mu$ is defined by
\beqn
P[\mu](z) := \int_{\mathbb T} \frac{1-|z|^2}{|e^{i \theta}-z|^2} d\mu(e^{i \theta})~(z \in \mathbb D).
\eeqn
For an analytic function $f : \mathbb D \rar E$ of the form $f(z)=\sum_{n=0}^{\infty}f_n z^n$ with $\{f_n\}_{n \in \mathbb N} \subseteq E$, set
\beq \label{Diri-n}
\|f\|^2_{\mu} :=\sum_{n=0}^{\infty}\|f_n\|^2_E + \int_{\mathbb D} \inp{P[\mu](z)f'(z)}{f'(z)}_E\, dA(z), 
\eeq
where $dA$ denotes normalized area measure on $\mathbb D.$
The {\it $E$-valued Dirichlet space} is defined as
\beqn
D(\mu):=\{f : \mathbb D \rar  E ~|~ f~\mbox{is an analytic function such that~} \|f\|_{\mu} < \infty\}.
\eeqn

\begin{proposition} 
Let $\mathscr H_2$ be a Dirichlet space associated with $\mathscr T=(V, \mathcal E)$ and let $\mathscr M_{z, 2}$ be the operator of multiplication by $z$ on $\mathscr H_2.$
Define a
positive $B(\mathscr E)$-valued measure $\mu_{{}_{\mathscr T}}$ given by
\beq \label{measure}
d\mu_{{}_{\mathscr T}}(e^{i \theta}) \Big(a e_{\rootb} + \sum_{v \in V_{\prec}} b_{v}\Big):= \Big(a e_{\rootb} + \sum_{v \in V_{\prec}} \frac{b_{v}}{n_v+2}\Big)d\sigma(e^{i \theta}),
\eeq
where $\mathscr E:=\ker \mathscr M^*_{z, 2}$,
$a \in \mathbb C$, $b_{v} \in l^2(\mathsf{Chi}(v)) \ominus \langle \lambdab^v\rangle$, and
$d\sigma$ denotes the normalized arclength measure on unit circle $\mathbb T$.
Then $\mathscr H_2$  equals the $\mathscr E$-valued Dirichlet space $D(\mu_{{}_{\mathscr T}})$ with equality of norms.
\end{proposition}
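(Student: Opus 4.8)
The plan is to compute the norm of a monomial $z^n f$ with $f \in \mathscr E$ in the space $D(\mu_{{}_{\mathscr T}})$ and match it against the formula from Corollary~\ref{D-norm}, then invoke Proposition~\ref{S-c-a-kernel-dim1}(iv) to conclude. First I would use the orthogonal decomposition $\mathscr E = \langle e_\rootb\rangle \oplus \bigoplus_{v \in V_\prec}\big(l^2(\child{v}) \ominus \langle \lambdab^v\rangle\big)$ coming from \eqref{kernel}, so that it suffices to treat $f = a e_\rootb + \sum_{v \in V_\prec} b_v$ and use that $P[\mu_{{}_{\mathscr T}}]$ acts diagonally with respect to this decomposition. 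The key scalar computation is this: for the normalized arclength measure $\sigma$ on $\mathbb T$, $P[\sigma] \equiv 1$, so for a single coordinate the relevant integral is $\int_{\mathbb D} |(z^n)'|^2\, dA(z) = n^2\int_{\mathbb D} |z|^{2(n-1)} dA(z) = \frac{n^2}{n}= n$; hence $\|z^n\|^2_{\sigma} = 1 + n$ (the $1$ coming from the $\|f_n\|^2_E$ term in \eqref{Diri-n}). More generally $\|z^n g\|_\sigma^2 = (n+1)\|g\|^2$ for a constant $g$.

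Next I would track the scaling factor $\frac{1}{n_v+2}$ built into $\mu_{{}_{\mathscr T}}$. Writing $f_n = a_n e_\rootb + \sum_{v\in V_\prec} b_{n,v}$, the Poisson integral $P[\mu_{{}_{\mathscr T}}]$ multiplies the $e_\rootb$-component by $1$ and the $b_v$-component by $\frac{1}{n_v+2}$; consequently $\langle P[\mu_{{}_{\mathscr T}}](z)(z^n f_n)'\!,\,(z^n f_n)'\rangle = n^2|z|^{2(n-1)}\big(|a_n|^2 + \sum_{v\in V_\prec}\frac{\|b_{n,v}\|^2}{n_v+2}\big)$, and integrating over $\mathbb D$ gives $n\big(|a_n|^2 + \sum_v \frac{\|b_{n,v}\|^2}{n_v+2}\big)$. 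Adding the $\|f_n\|^2_E = |a_n|^2 + \sum_v \|b_{n,v}\|^2$ contribution yields $\|z^n f_n\|^2_{\mu_{{}_{\mathscr T}}} = (n+1)|a_n|^2 + \sum_{v\in V_\prec}\big(1 + \frac{n}{n_v+2}\big)\|b_{n,v}\|^2 = (n+1)|a_n|^2 + \sum_{v\in V_\prec}\frac{n_v+n+2}{n_v+2}\|b_{n,v}\|^2$. Now I would observe that $\frac{(q)_n}{(1)_n}$ with $q=2$ is $\frac{(n+1)!}{n!} = n+1$, and $\frac{(n_v+q+1)_n}{(n_v+2)_n} = \frac{(n_v+3)_n}{(n_v+2)_n} = \frac{(n_v+n+2)!/(n_v+2)!}{(n_v+n+1)!/(n_v+1)!} = \frac{n_v+n+2}{n_v+2}$; these are exactly the weights in Corollary~\ref{D-norm}, so the two monomial norms agree term by term.

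Finally I would assemble the argument: by Proposition~\ref{S-c-a-kernel-dim1}(iv) an orthonormal basis $\mathscr B$ of $\mathscr E$ gives an orthogonal basis $\{z^n g : g\in\mathscr B, n\in\mathbb N\}$ of $\mathscr H_2$, and the same family is orthogonal in $D(\mu_{{}_{\mathscr T}})$ (cross terms vanish because in \eqref{Diri-n} the $\|f_n\|^2_E$ part is manifestly orthogonal across degrees and the integral of $\langle P[\mu](z^n g)', (z^m h)'\rangle$ over $\mathbb D$ vanishes for $n\neq m$ by the orthogonality of $z^{n-1}, z^{m-1}$ under $dA$). Since the monomial norms coincide, the identity map on $\mathscr E$-valued polynomials extends to an isometry, and since such polynomials are dense in $\mathscr H_2$ (Proposition~\ref{S-c-a-kernel-dim1}(iii)) and in $D(\mu_{{}_{\mathscr T}})$, the two spaces coincide with equal norms. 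The main obstacle I anticipate is purely bookkeeping: confirming that $P[\mu_{{}_{\mathscr T}}]$ is genuinely diagonal in the $\mathscr E$-decomposition and that the constant $\frac{1}{n_v+2}$ propagates correctly through the derivative and area integral — i.e. making sure the Pochhammer arithmetic $\frac{(n_v+3)_n}{(n_v+2)_n} = \frac{n_v+n+2}{n_v+2}$ is matched against $1 + \frac{n}{n_v+2}$ with no off-by-one error. There is also a small point to check that $D(\mu_{{}_{\mathscr T}})$ consists of $\mathscr E$-valued \emph{holomorphic} functions on $\mathbb D$, which follows since $\|f\|_{\mu_{{}_{\mathscr T}}}\Ge \|f\|_{\mathscr H_1}$-type control forces convergence on $\mathbb D$, as already noted in Corollary~\ref{D-norm}.
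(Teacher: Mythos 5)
Your proposal is correct and follows essentially the same route as the paper: both reduce to the monomial computation $\|z^n f_n\|^2_{\mu_{{}_{\mathscr T}}} = (n+1)|a_n|^2 + \sum_{v\in V_\prec}\bigl(1+\tfrac{n}{n_v+2}\bigr)\|b_{n,v}\|^2$ using $P[\sigma]\equiv 1$ and $\int_{\mathbb D}n^2|z|^{2(n-1)}\,dA = n$, match it against Corollary~\ref{D-norm} via the same Pochhammer arithmetic, and close the argument with Proposition~\ref{S-c-a-kernel-dim1} together with the density of $\mathscr E$-valued polynomials in $D(\mu_{{}_{\mathscr T}})$ from \cite[Corollary 3.1]{O}.
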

\begin{proof}
We claim that if $f \in \mathscr H_2$ then $\|f\|_{\mathscr H_2}=\|f\|_{\mu_{{}_{\mathscr T}}}$. 
Note that by Corollary \ref{D-norm}, \beq \label{eq-norm-f}
\|f\|^2_{\mathscr H_2} &=& 
\sum_{n=0}^{\infty} \Big((n+1)|a_{n}|^2 + \sum_{v \in V_{\prec}} \Big(1+ \frac{n}{n_v+2}\Big) \|b_{n, v}\|^2_{E}\Big) \notag \\
&=& \|f\|^2_{\mathscr H_1} + \sum_{n=0}^{\infty} n\Big( |a_n|^2 + \sum_{v \in V_{\prec}} \frac{\|b_{n, v}\|^2_{E}}{n_v+2}\Big).
\eeq
In view of Proposition \ref{S-c-a-kernel-dim1}, it suffices to verify 
$\|f_n\|_{\mathscr H_2}=\|f_n\|_{\mu_{{}_{\mathscr T}}}$, where $$f_n(z)=\left(a e_{\rootb} + \sum_{v \in V_{\prec}} b_{v}\right)z^n~(n \Ge 1).$$ 
However, in the light of \eqref{Diri-n} and \eqref{eq-norm-f}, it is enough to verify that 
\beqn
n\Big( |a|^2 + \sum_{v \in V_{\prec}} \frac{\|b_{v}\|^2_{l^2(V)}}{n_v+2}\Big) = \int_{\mathbb D} \inp{P[\mu_{{}_{\mathscr T}}](z)f'_n(z)}{f'_n(z)}_{E}\, dA(z),
\eeqn
where $f'_n(z)=\Big(a e_{\rootb} + \sum_{v \in V_{\prec}} b_{v}\Big)n z^{n-1}$ for integers $n \geqslant 1.$ Note first that for $z \in \mathbb D,$
\beq \label{inde-m}
P[\mu_{{}_{\mathscr T}}](z) \Big(a e_{\rootb} + \sum_{v \in V_{\prec}} b_{v}\Big) &=& P[\sigma](z)\Big(a e_{\rootb} + \sum_{v \in V_{\prec}} \frac{b_{v}}{n_v+2}\Big) \notag \\
&=&a e_{\rootb} + \sum_{v \in V_{\prec}} \frac{b_{v}}{n_v+2}.
\eeq
Since $\|g\|_{\mathscr H_2}=\|g\|_{l^2(V)}$ for every $g \in \mathscr E$ (Proposition \ref{S-c-a-kernel-dim1}),
it now follows that
\beqn
&& \int_{\mathbb D} \inp{P[\mu_{{}_{\mathscr T}}](z)f'_n(z)}{f'_n(z)}_{E}\, dA(z) \\ &=& \int_{\mathbb D} n^2|z^{n-1}|^2 \Big \langle a e_{\rootb} + \sum_{v \in V_{\prec}} \frac{b_{v}}{n_v+2}, ~{a e_{\rootb} + \sum_{v \in V_{\prec}} b_{v}}\Big \rangle_{E}\, dA(z) \\
&=&\Big( |a|^2 + \sum_{v \in V_{\prec}} \frac{\|b_{v}\|^2_{\mathscr H_2}}{n_v+2}\Big) \int_{\mathbb D} n^2|z^{n-1}|^2  dA(z) \\
&=& n\Big( |a|^2 + \sum_{v \in V_{\prec}} \frac{\|b_{v}\|^2_{l^2(V)}}{n_v+2}\Big).
\eeqn
Thus the claim stands verified.
The claim implies in particular that $\mathscr H_2 \subseteq D(\mu_{{}_{\mathscr T}})$. Since $E$-valued analytic polynomials are dense in $D(\mu_{{}_{\mathscr T}})$ (\cite[Corollary 3.1]{O}),
we must have $\mathscr H_2 = D(\mu_{{}_{\mathscr T}})$.
\end{proof}

As in the classical case \cite[Corollary 1.4.3]{EKMR}, any Dirichlet space $\mathscr H_2$ associated with $\mathscr T$ admits the conformal invariance property.
\begin{corollary}
Let $\mathscr H_2$ be a Dirichlet space associated with $\mathscr T=(V, \mathcal E)$ and
let $\phi$ be an automorphism of the unit disc. Then, for every $f \in  \mathscr H_2,$ $f \circ \phi \in \mathscr H_2$ and
\beqn \int_{\mathbb D} \inp{P[\mu_{\mathscr T}](z)(f \circ \phi)'(z)}{(f \circ \phi)'(z)}_E\, dA(z)=\int_{\mathbb D} \inp{P[\mu_{\mathscr T}](z)f'(z)}{f'(z)}_E\, dA(z),
\eeqn
where $\mu_{\mathscr T}$ is as given in \eqref{measure}.
\end{corollary}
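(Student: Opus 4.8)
The plan rests on a single observation: the operator $P[\mu_{\mathscr T}](z)$ occurring in the integrand does not depend on $z$. Indeed, by \eqref{inde-m}, for every $z \in \mathbb D$ it equals the fixed bounded positive operator $T \in B(\mathscr E)$ determined by $T\big(a\,e_{\rootb} + \sum_{v \in V_{\prec}} b_{v}\big) = a\,e_{\rootb} + \sum_{v \in V_{\prec}}\tfrac{b_{v}}{n_v+2}$. Consequently the assertion splits into two parts: the integral identity
\beqn
\int_{\mathbb D}\inp{T(f\circ\phi)'(z)}{(f\circ\phi)'(z)}_E\,dA(z) &=& \int_{\mathbb D}\inp{Tf'(z)}{f'(z)}_E\,dA(z),
\eeqn
and the membership statement $f\circ\phi \in \mathscr H_2$.

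First I would prove the displayed identity by transcribing the classical scalar computation behind \cite[Corollary 1.4.3]{EKMR}, which works verbatim once the weight is a fixed operator. Since $\phi$ is holomorphic and scalar valued, the chain rule gives $(f\circ\phi)'(w) = \phi'(w)\,f'(\phi(w))$, so pulling the scalar $\phi'(w)$ out of the $E$-inner product yields $\inp{T(f\circ\phi)'(w)}{(f\circ\phi)'(w)}_E = |\phi'(w)|^2\,\inp{Tf'(\phi(w))}{f'(\phi(w))}_E$. Because $\phi$ is a holomorphic bijection of $\mathbb D$ onto $\mathbb D$ whose real Jacobian is $|\phi'(w)|^2$, integrating this over $w \in \mathbb D$ and substituting $z = \phi(w)$ produces exactly $\int_{\mathbb D}\inp{Tf'(z)}{f'(z)}_E\,dA(z)$. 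The integrand is nonnegative, so no integrability hypothesis is needed to justify the substitution (one may integrate first over $\{|w|<r\}$ and let $r\to1$). I expect this step to be entirely routine.

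For the membership, note that by Corollary \ref{D-norm} we have $\mathscr H_2 \subseteq \mathscr H_1$, where $\mathscr H_1$ is the vector-valued Hardy space identified in the Remark following Proposition \ref{S-c-a-kernel-dim1}; and by the identification $\mathscr H_2 = D(\mu_{\mathscr T})$ established just above together with \eqref{Diri-n}, a holomorphic $g:\mathbb D\to\mathscr E$ belongs to $\mathscr H_2$ precisely when $g\in\mathscr H_1$ and $\int_{\mathbb D}\inp{Tg'(z)}{g'(z)}_E\,dA(z)<\infty$. Since $f\in\mathscr H_2\subseteq\mathscr H_1$ and the composition operator $g\mapsto g\circ\phi$ induced by a disc automorphism is bounded on the vector-valued Hardy space $\mathscr H_1$ (which reduces to the scalar fact via $\mathscr H_1\cong H^2\otimes\mathscr E$), we get $f\circ\phi\in\mathscr H_1$; combined with the finiteness of the left-hand side of the displayed identity, this gives $f\circ\phi\in\mathscr H_2$. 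The only ingredient not internal to the paper is the boundedness of the composition operator on the vector-valued Hardy space, which is standard; apart from that I do not anticipate any real obstacle, the whole argument being a direct transcription of the scalar Dirichlet-integral computation once the constancy of $P[\mu_{\mathscr T}](z)$ in $z$ is noted.
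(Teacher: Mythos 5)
Your proposal is correct and takes essentially the same route as the paper: both hinge on the observation (via \eqref{inde-m}) that $P[\mu_{\mathscr T}](z)$ is independent of $z$, establish the integral identity by the chain rule together with the change of variables $z=\phi(w)$, and obtain $f\circ\phi\in\mathscr H_2$ from the boundedness of the composition operator on the Hardy part of the norm (the paper cites this as Littlewood's Theorem). Your handling of the membership step is merely more explicit than the paper's one-line appeal to that theorem.
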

\begin{proof}
Let $f \in \mathscr H_2$. 
Note that by change of variables, \beqn && \int_{\mathbb D} \inp{P[\mu_{\mathscr T}](z)(f \circ \phi)'(z)}{(f \circ \phi)'(z)}_E\, dA(z) \\ &=& \int_{\mathbb D}  \inp{P[\mu_{\mathscr T}](z)f'(\phi(z))}{f'(\phi(z))}_E\, |\phi'(z)|^2 dA(z) 
\\ &\overset{\eqref{inde-m}}=& \int_{\mathbb D}  \inp{P[\mu_{\mathscr T}](w)f'(w)}{f'(w)}_E\, dA(w),
\eeqn
which completes the proof of the second part. The first part now follows from Littlewood's Theorem \cite[Chapter 1]{S}.
\end{proof}


We present below a proof of the special case $q=2$ of Theorem \ref{solution}, which exploits the theory of vector-valued Dirichlet spaces \cite{O}.
\begin{proof}[Proof of Theorem \ref{solution} (Case $q=2$)]
For $j=1, 2$, let $\mathscr H^{(j)}_2$ be the Dirichlet space associated with $\mathscr T_j=(V_j, \mathcal E_j)$ and let $\mathscr M^{(j)}_{z, 2}$ be the operator of multiplication by $z$ on $\mathscr H^{(j)}_2.$
In view of Proposition \ref{S-c-a-kernel-dim1}, it suffices to check that $\mathscr M^{(1)}_{z, 2}$ is unitarily equivalent to $\mathscr M^{(2)}_{z, 2}$ if and only if \eqref{constant-g} holds for every $n \in \mathbb N.$
By the preceding proposition and \cite[Theorem 4.2]{O},
the multiplication operator $\mathscr M^{(1)}_{z, 2}$ on $D(\mu_{{}_{\mathscr T_1}})$ is unitarily equivalent to the multiplication operator $\mathscr M^{(2)}_{z, 2}$ on $D(\mu_{{}_{\mathscr T_2}})$ if and only if 
there exists a unitary map $U : \mathscr E_1 \rar \mathscr E_2$ such that $\mu_{{}_{\mathscr T_1}}(A) = U^*\mu_{{}_{\mathscr T_2}}(A)U$ for every Borel subset $A$ of the unit circle $\mathbb T$ (see \eqref{measure}). However, by \eqref{measure}, this happens if and only if the diagonal matrices 
$\oplus_{v \in V^{(1)}_{\prec}} \alpha_v I_{\beta_{v, 1}}$ and $\oplus_{v \in V^{(2)}_{\prec}} \alpha_v I_{\beta_{v, 2}}$ are unitarily equivalent, where 
$I_m$ denotes the $m \times m$ identity matrix and
\beqn \alpha_v:= \frac{1}{n_v+2}, \quad \beta_{v, j}:= \mbox{card}(\child{v})-1~(j=1, 2).\eeqn 
The later one holds if and only if $\oplus_{v \in V^{(1)}_{\prec}} \alpha_v I_{\beta_{v, 1}}$ and $\oplus_{v \in V^{(2)}_{\prec}} \alpha_v I_{\beta_{v, 2}}$ have the same eigenvalues counted with multiplicity.
However, $\sum_{v \in V^{(j)}_{\prec} \cap \mathcal G^{(1)}_n}\Big(\mbox{card}(\child{v})-1\Big)$ is the multiplicity of the eigenvalue $\frac{1}{n+2}$ for $j=1, 2.$
The desired equivalence is now immediate.
\end{proof}

\section{Bergman Spaces Associated with Directed Trees}


It is evident from the proof of Proposition \ref{S-c-a-kernel-dim1} that one can also associate a functional Hilbert space, say, $\mathscr H_{\textendash q}$ with the Cauchy dual $S_{\lambda', q}$ of the Dirichlet shift $S_{\lambda, q}.$ 
In particular, $\mathscr H_{\textendash q}$ is a reproducing kernel Hilbert space associated with the kernel $\kappa_{\mathscr H_{\textendash q}}$ given by
 \beqn
\kappa_{\mathscr H_{\textendash q}}(z, w) &=& \sum_{n=0}^{\infty}\frac{(q)_n}{(1)_n}~ {z^n \overline{w}^n} ~P_{\langle e_\rootb \rangle} \\  &+& 
\sum_{v \in V_{\prec}} \sum_{n=0}^{\infty}  \frac{(n_v +q+1)_n}{(n_v + 2)_n} {z^n \overline{w}^n}~P_{l^2(\child{v}) \ominus \langle \lambdab^v \rangle}~(z, w \in \mathbb D).
\eeqn
This has been recorded in \cite[Proposition 5.1.8]{CPT}. 
It is equally clear that $S_{\lambda', q}$ is unitarily equivalent to the operator of multiplication by $z$ on $\mathscr H_{\textendash q}$. 

\begin{definition} 
Let $q$ be an integer bigger than $1$. 
Let $\mathscr T = (V, \mathcal E)$ be a leafless, locally finite rooted directed tree. 
We refer to $\mathscr H_{\textendash q}$ as the {\it Bergman space associated with $\mathscr T$.}
\end{definition}
\begin{remark} \label{c-Bergman}
Note that in case $\mathscr T$ is a rooted directed tree without any branching vertex, $\mathscr H_{\textendash q}$ is nothing but the {\it classical Bergman space} $B_q~(q \Ge 2)$, that is, the reproducing kernel Hilbert space associated with $$\sum_{n=0}^{\infty}\frac{(q)_n}{(1)_n}~ {z^n \overline{w}^n}~(z, w \in \mathbb D)$$
(refer to \cite{HKZ} for the basic theory of classical Bergman spaces).  
\end{remark}

\begin{lemma} \label{B-norm}
Let $\mathscr H_{\textendash q}$ be a Bergman space associated with $\mathscr T=(V, \mathcal E)$ and let $\mathscr M_{z, \textendash q}$ be the operator of multiplication by $z$ on $\mathscr H_{\textendash q}.$
Then for $f(z)=\sum_{n=0}^{\infty}f_n z^n$ in $\mathscr H_{\textendash q}$ 
with $f_n=a_{n}e_{\rootb} + \sum_{v \in V_{\prec}} b_{n, v} \in \ker \mathscr M^*_{z, \textendash q}$, $a_{n} \in \mathbb C$ and $b_{n, v} \in l^2(\mathsf{Chi}(v)) \ominus \langle \lambdab^v\rangle$ for every $n \in \mathbb N,$
we have the following:
\begin{itemize}
\item[(i)]
$
\|f\|^2_{\mathscr H_{\textendash q}} = \sum_{n=0}^{\infty} \Big(|a_{n}|^2\, \frac{(1)_n}{(q)_n} + \sum_{v \in V_{\prec}}\|b_{n, v}\|^2_{l^2(V)}\,\frac{(n_v+2)_n}{(n_v+q+1)_n}\Big).
$
\item[(ii)]  if $q \Ge 2,$ then $$
\|f\|^2_{\mathscr H_{\textendash q}} =\int_{\mathbb D} \inp{d\nu_{{}_{\mathscr T}}(z)f(z)}{f(z)}=\sum_{n=0}^{\infty}\int_{\mathbb D} |z^n|^2 \inp{d\nu_{{}_{\mathscr T}}(z)f_n}{f_n},$$
where \beq \label{measure-0}
d\nu_{{}_{\mathscr T}}(z) \Big(a e_{\rootb} + \sum_{v \in V_{\prec}} b_{v}\Big):= \Big(a w_0(z) e_{\rootb} + \sum_{v \in V_{\prec}} w_{n_v+1}(z)b_v \Big)dA(z)
\eeq
with 
$dA$ denoting the normalized area measure on unit disc $\mathbb D$ and $$w_l(z)={(l+1)\cdots(l+q-1)} \displaystyle \sum_{i=1}^{q-1}\frac{|z|^{2(i + l-1)}}{\displaystyle
\prod_{1 \Le j \neq i \Le q-1}(j -i)}~(z \in \mathbb D, ~l \in \mathbb N).$$
\end{itemize}
In particular, $\mathscr H_{\textendash q}$ equals $L^2_a(\nu_{{}_{\mathscr T}})$, that is, the closure of $E$-valued analytic polynomials in $L^2(\nu_{{}_{\mathscr T}})$, with equality of norms.
\end{lemma}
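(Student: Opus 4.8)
The plan is to compute the norm on $\mathscr{H}_{\textendash q}$ term by term using the orthogonal decomposition furnished by Proposition~\ref{S-c-a-kernel-dim1}(iv) (applied to the Cauchy dual, as indicated in the discussion preceding the definition of $\mathscr{H}_{\textendash q}$), and then reorganize the resulting scalar weights into an integral against the measure $\nu_{{}_{\mathscr T}}$. For part~(i), I would first observe that $\{z^n g : g \in \mathscr{B},\ n \in \mathbb{N}\}$ is an orthogonal basis, where $\mathscr{B}$ is an orthonormal basis of the constant subspace $\mathscr{E}$; since $S_{\lambda', q}$ is unitarily equivalent to $\mathscr{M}_{z, \textendash q}$, the norm $\|z^n f_n\|^2$ equals $\|S^n_{\lambda', q} f_n\|^2$. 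Splitting $f_n = a_n e_{\rootb} + \sum_{v \in V_\prec} b_{n,v}$ along the decomposition \eqref{kernel} and using the mutual orthogonality of $\{S^n_{\lambda', q} e_w\}$ over disjoint subtrees together with Lemma~\ref{formula-moments} (which gives $\|S^n_{\lambda', q} e_v\|^2 = (n_v+1)_n/(n_v+q)_n$), one obtains $\|z^n f_n\|^2 = |a_n|^2 \frac{(1)_n}{(q)_n} + \sum_{v \in V_\prec} \|b_{n,v}\|^2 \frac{(n_v+2)_n}{(n_v+q+1)_n}$, since $n_{e_{\rootb}} = 0$ and $n_v + 1$ is the depth of any child of $v$. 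Summing over $n$ yields (i).

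For part~(ii), the key is the partial-fraction identity: for $l \in \mathbb{N}$ and $q \Ge 2$,
\[
\frac{(l+2)_n}{(l+q+1)_n} = \frac{(l+1)!\,(l+q)_n}{(l+q)!}\cdot\frac{(l+1)_n}{(l+1)_n}\cdot\frac{1}{\binom{n+l+q-1}{q-1}\big/\binom{l+q-1}{q-1}}
\]
— more precisely, I would verify directly that $\int_{\mathbb D} |z^n|^2 w_l(z)\, dA(z) = \frac{(l+2)_n}{(l+q+1)_n}$ for every $n \in \mathbb{N}$ (with the convention $w_0$ handling the root term $\frac{(1)_n}{(q)_n}$). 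Using $\int_{\mathbb D} |z|^{2m}\, dA(z) = \frac{1}{m+1}$, the integral of $|z^n|^2 w_l(z)$ becomes $(l+1)\cdots(l+q-1)\sum_{i=1}^{q-1} \frac{1}{(n+i+l)\prod_{j\ne i}(j-i)}$, which by the standard Lagrange-interpolation / partial-fraction expansion of $\frac{1}{\prod_{i=1}^{q-1}(x+i+l)}$ evaluated appropriately equals $\frac{(l+1)\cdots(l+q-1)}{(n+l+1)\cdots(n+l+q-1)} = \frac{(l+2)_n}{(l+q+1)_n}$. Matching $l = 0$ against the root coefficient and $l = n_v + 1$ against the branching-vertex coefficients (so that $w_{n_v+1}$ multiplies $b_v$, consistently with \eqref{measure-0}) then shows that the norm formula in (i) is exactly $\sum_n \int_{\mathbb D} |z^n|^2 \inp{d\nu_{{}_{\mathscr T}}(z) f_n}{f_n}$; the unweighted cross terms vanish because $\nu_{{}_{\mathscr T}}$ acts diagonally on the orthogonal pieces $a_n e_{\rootb}$ and $b_{n,v}$ and distinct monomials $z^n$ are orthogonal in $L^2$ of a rotation-invariant measure.

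The final assertion, that $\mathscr{H}_{\textendash q} = L^2_a(\nu_{{}_{\mathscr T}})$ with equality of norms, then follows formally: the computation just performed shows the identity map is isometric on $E$-valued polynomials, which are dense in $\mathscr{H}_{\textendash q}$ by Proposition~\ref{S-c-a-kernel-dim1}(iii), and $L^2_a(\nu_{{}_{\mathscr T}})$ is by definition the closure of those same polynomials in $L^2(\nu_{{}_{\mathscr T}})$; hence the isometry extends to a unitary identification. I expect the main obstacle to be the partial-fraction verification that $\int_{\mathbb D} |z^n|^2 w_l(z)\, dA(z) = \frac{(l+2)_n}{(l+q+1)_n}$ — in particular keeping the index bookkeeping straight (the shift between "depth $n_v$" and "depth of children $n_v+1$", and the role of $w_0$ versus $w_l$), and confirming that the Lagrange-basis weights $\prod_{j \ne i}(j-i)^{-1}$ combine correctly so that $\sum_{i=1}^{q-1}\frac{w_l\text{-coefficient}}{n+i+l}$ telescopes to the desired Pochhammer ratio. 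One should also check that $w_l(z) \ge 0$ on $\mathbb{D}$ so that $\nu_{{}_{\mathscr T}}$ is a genuine positive operator-valued measure, which is needed for $L^2(\nu_{{}_{\mathscr T}})$ to make sense; this is where positivity of the kernel $\kappa_{\mathscr H_{\textendash q}}$ (equivalently subnormality of $S_{\lambda', q}$, Proposition~\ref{p-iso}(i)) can be invoked to shortcut a direct sign analysis.
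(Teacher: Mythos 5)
Your proposal is correct in substance and handles part (i) exactly as the paper does (via the orthogonal basis $\{z^n g\}$ and Lemma \ref{formula-moments}), but for part (ii) you take a genuinely different route: the paper simply deduces the integral representation from (i) by citing \cite[Corollary 3.8]{AC} and then disposes of the cross terms with $\int_{\mathbb D}z^n\overline{z}^m w(|z|)\,dA=\delta_{mn}$, whereas you reconstruct the representing measure by hand through the partial-fraction identity $\sum_{i=1}^{q-1}\frac{1}{(x+i)\prod_{j\neq i}(j-i)}=\frac{1}{\prod_{i=1}^{q-1}(x+i)}$ at $x=n+l$. That identity is exactly right and your computation does telescope to the correct value; your approach buys self-containedness (no appeal to the moment-problem machinery of \cite{AC}) at the cost of the bookkeeping you yourself flag. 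Two points to repair. First, an off-by-one in the Pochhammer relabeling: the correct general statement is $\int_{\mathbb D}|z^n|^2 w_l(z)\,dA(z)=\frac{(l+1)\cdots(l+q-1)}{(n+l+1)\cdots(n+l+q-1)}=\frac{(l+1)_n}{(l+q)_n}$, \emph{not} $\frac{(l+2)_n}{(l+q+1)_n}$; the target coefficients are then recovered by substituting $l=0$ for the root (giving $\frac{(1)_n}{(q)_n}$) and $l=n_v+1$ for a branching vertex (giving $\frac{(n_v+2)_n}{(n_v+q+1)_n}$), so your intermediate product expression is the correct one and only the final Pochhammer form is mislabeled (the displayed ``identity'' at the start of your part (ii), containing the factor $(l+1)_n/(l+1)_n$, should simply be deleted). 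Second, positivity of $w_l$ needs no appeal to subnormality: summing the Lagrange weights gives the closed form $w_l(z)=\frac{(l+1)\cdots(l+q-1)}{(q-2)!}\,|z|^{2l}(1-|z|^2)^{q-2}\geqslant 0$, which also makes the evaluation of $\int_{\mathbb D}|z|^{2n}w_l\,dA$ a one-line Beta-integral check. With these corrections your argument, including the density argument for $\mathscr H_{\textendash q}=L^2_a(\nu_{{}_{\mathscr T}})$, is complete.
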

\begin{proof}
The first part can be obtained along the lines of the proof of Corollary \ref{D-norm}. The first equality in the second part may be deduced from (i) and \cite[Corollary 3.8]{AC} while the second one is an immediate consequence of the fact that $$\int_{\mathbb D} z^n \overline{z}^m w(|z|)dA(z)=\delta_{mn}~(m, n \in \mathbb N)$$ for any continuous function $w : [0, 1] \rar (0, \infty).$
\end{proof}
The Bergman spaces $\mathscr H_{\textendash q}$ can be realized as Hilbert modules over the polynomial ring $\mathbb C[z]$ with module action given by 
\beqn
(p, f) \in \mathbb C[z] \times \mathscr H_{\textendash q} \longmapsto p(z)f \in \mathscr H_{\textendash q}.
\eeqn
A family of Hilbert modules $\mathcal H^{(\eta, Y)}_N$ (vector-valued analogs of the reproducing kernel Hilbert spaces associated with the kernel $\frac{1}{(1-z\overline{w})^{q}},$ $q >0$) have been introduced in \cite{KM} in the context of classification of homogeneous operators within the class of Cowen-Douglas operators (see the discussion prior to \cite[Theorem 3.1]{KM} for the exact description of the spaces $\mathcal H^{(\eta, Y)}_N$). 
Recall that a bounded linear operator $T$ on $\mathcal H$ is {\it homogeneous} if  for every automorphism $\phi$ of the open unit disc $\mathbb D,$ $\phi(T)$ is unitarily equivalent to 
$T$. 
Note that the assumption that $\phi(T)$ is well-defined as a  bounded linear operator on $\mathcal H$ is a part of the definition of the homogeneous operator. Also, the multiplication operator $\mathscr M_{z, \textendash q}$ on the classical Bergman space $B_q$ provides an example of homogeneous operator \cite[Theorem 5.2]{BM}.

We say that the Hilbert module $\mathscr H_{\textendash q}$ over $\mathbb C[z]$ is {\it homogeneous} if $\mathscr M_{z, \textendash q}$ is homogeneous.
The natural question arises here is that for which directed trees $\mathscr T$, $\mathscr H_{\textendash q}$ is a homogeneous Hilbert module ? The second question arises is whether
 $\mathscr H_{\textendash q}$ and $\mathcal H^{(\eta, Y)}_N$ are unitarily equivalent as Hilbert modules over $\mathbb C[z]$ ? Both these questions can be answered with the help of \cite[Theorem 4.1]{KM} and the following general fact.

\begin{proposition} \label{homogeneous}
Let $\mathscr H_{\textendash q}$ be a Bergman space associated with the directed tree $\mathscr T=(V, \mathcal E)$ of finite branching index. 
Then 
the following statements are equivalent: 
\begin{itemize}
\item[(i)] $\mathscr H_{\textendash q}$ is a homogeneous Hilbert module. 
\item[(ii)] $\mathscr T$ is isomorphic to $\mathbb N.$
\end{itemize}
\end{proposition}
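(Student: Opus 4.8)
The plan is to prove the equivalence by showing that homogeneity of $\mathscr{M}_{z,\textendash q}$ forces the weight sequence of the underlying weighted shift to be precisely that of the classical Bergman shift, which by the structure of $\mathscr{H}_{\textendash q}$ can only happen when there are no branching vertices. The implication (ii)$\Rightarrow$(i) is immediate from Remark \ref{c-Bergman} together with \cite[Theorem 5.2]{BM}, so the content is in (i)$\Rightarrow$(ii).

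First I would record what homogeneity means concretely on $\mathscr{H}_{\textendash q}$. By Proposition \ref{S-c-a-kernel-dim1} applied to the Cauchy dual, $\mathscr{M}_{z,\textendash q}$ is unitarily equivalent to $S_{\lambda', q}$, and by Lemma \ref{formula-moments} the moments $\|S^k_{\lambda', q}e_v\|^2 = (n_v+1)_k/(n_v+q)_k$ depend only on the depth $n_v$. Thus $l^2(V)$ decomposes under $S_{\lambda', q}$ into a countable orthogonal sum of classical-type weighted shifts: on $\langle e_\rootb\rangle$ one gets the classical Bergman shift $S_{w,q}'$ (depth $0$), and for each $v\in V_\prec$ one gets, on $l^2(\mathsf{Chi}(v))\ominus\langle\lambdab^v\rangle$, a copy (of multiplicity $\mathrm{card}(\mathsf{Chi}(v))-1$) of the classical weighted shift with moments $(n_v+2)_k/(n_v+q+1)_k$, i.e. the ``shifted'' Bergman shift of parameter starting at depth $n_v+1$. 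Equivalently, as a Hilbert module $\mathscr{H}_{\textendash q}$ is a finite direct sum of reproducing kernel spaces with kernels $\sum_n \frac{(l+1)_n}{(l+q)_n} z^n\bar w^n$ for the various values $l = 0$ and $l = n_v+1$, $v\in V_\prec$.

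Next, I would invoke \cite[Theorem 4.1]{KM}: the homogeneous Hilbert modules (within the relevant Cowen--Douglas class) built from such kernels are classified, and a direct sum of the above type is homogeneous precisely when the constituent weighted shifts assemble into one of the spaces $\mathcal{H}^{(\eta,Y)}_N$; in particular the ``shift parameters'' of the summands must match the arithmetic progression dictated by that classification. Concretely, a finite orthogonal sum of classical weighted shifts with moments $(l_i+1)_k/(l_i+q)_k$ is homogeneous only if all the $l_i$ are equal (this is where one uses that homogeneity of $\mathscr{M}_z$ on a direct sum forces each summand, hence each ``level shift'', to be homogeneous with the \emph{same} unitary conjugation implementing $\phi(T)\cong T$ simultaneously — and the only way distinct classical Bergman-type shifts can be simultaneously homogeneous-equivalent is to coincide). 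Since the summand coming from $\langle e_\rootb\rangle$ always has $l=0$, homogeneity forces $l=0$ for every branching vertex contribution as well, i.e. $n_v+1=0$, which is impossible for $v\in V$. Hence $V_\prec$ must be empty, and a leafless rooted directed tree with no branching vertices is isomorphic to $\mathbb{N}$.

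The main obstacle is the rigidity step: carefully justifying that homogeneity of the \emph{single} operator $\mathscr{M}_{z,\textendash q}$ on the direct sum $\mathscr{H}_{\textendash q}$ rules out the mismatched shift parameters. I expect to handle this by combining two ingredients. One is that homogeneity of $T=\bigoplus T_i$, together with the fact that the $T_i$ have mutually related but generically distinct essential spectra / reproducing kernels, forces a block-diagonal structure for the implementing unitaries (using uniqueness of the Cowen--Douglas data, or an argument via the defect operator $\mathcal{D}_{\mathscr{M}_{z,\textendash q}}$ as in the previous proposition, noting $\mathcal{D}$ must be a scalar under $\phi(T)\cong T$ only when parameters agree). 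The other is the explicit classification in \cite[Theorem 4.1]{KM} of when such a block-diagonal family of classical weighted shifts is jointly homogeneous, which pins the parameters to a single value. Modulo quoting these two results, the rest — computing the relevant weights from \eqref{weights-dual}, reading off the multiplicities $\mathrm{card}(\mathsf{Chi}(v))-1$ from \eqref{kernel}, and concluding $V_\prec=\emptyset$ — is routine.
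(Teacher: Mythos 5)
Your proposal follows essentially the same route as the paper: both decompose $\mathscr M_{z,\textendash q}$ into an orthogonal sum of classical-type weighted shifts indexed by $e_{\rootb}$ and the spaces $l^2(\child{v})\ominus\langle\lambdab^v\rangle$, both use the Koranyi--Misra/Cowen--Douglas machinery to reduce homogeneity of the sum to homogeneity of each $B_1(\mathbb D)$ summand, and both conclude via the moment comparison $\frac{(n_v+2)_k}{(n_v+q+1)_k}=\frac{(1)_k}{(a)_k}$ forcing the absurdity $n_v+1=0$. The one step you leave as a plan --- that homogeneity of the direct sum passes to each summand --- is exactly the point the paper settles by invoking the bundle classification of \cite{BiM} together with \cite[Corollary 2.1]{KM} and then \cite[List 3.1]{BM} (your alternative via the defect operator is not needed and would be harder to make work), so modulo making that citation precise your argument is the paper's.
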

\begin{proof} Let $\phi$ be an  automorphism of the open unit disc $\mathbb D.$
Since $\mathscr M_{z, \textendash q}$ is of spectral radius $1$ (Lemma \ref{formula-moments}) and since $\kappa({\cdot, w})g$ provides eigenvector for $\mathscr M^*_{z, \textendash q}$ with corresponding eigenvalue $\overline{w} \in \mathbb D$, the spectrum of $\mathscr M_{z, \textendash q}$ equals the closed unit disc $\overline{\mathbb D}.$
In particular,  $\mathscr M_{\phi, q}:=\phi(\mathscr M_{z, \textendash q})$ makes sense. 

In case $\mathscr T$ is isomorphic to $\mathbb N$, the implication (ii) $\Longrightarrow$ (i) is immediate from
\cite[Theorem 5.2]{BM}. 

To see that (i) implies (ii), suppose there exists a unitary $U : \mathscr H_{\textendash q} \rar \mathscr H_{\textendash q}$ such that $U\mathscr M_{z, \textendash q}=\mathscr M_{\phi, q}U.$  Let $\{g_j : j=1, \cdots, d\}$ be an orthonormal basis for $E:=\ker \mathscr M^*_{z, \textendash q}$, where $g_1:=e_{\rootb}$ and $d$ is finite. One may conclude from \eqref{moment} and \eqref{orthog} that for $1 \Le i \neq j \Le d,$ the sequences $\{z^ng_i\}_{n \in \mathbb N}$ and $\{z^ng_j\}_{n \in \mathbb N}$ are mutually orthogonal.
This yields the decomposition 
$\mathscr H_{\textendash q} = \oplus_{j=1}^d \mathscr H_{j},$ where
\beqn \mathscr H_j = \bigvee \{z^ng_j :n \in \mathbb N\}~(j=1, \cdots, d).
\eeqn
Since $\mathscr H_1, \cdots, \mathscr H_d$ are $z$-invariant subspaces of $\mathscr H_{\textendash q},$ $\mathscr M_{z, \textendash q}=\oplus_{j=1}^d \mathscr M^{(j)}_{z, \textendash q},$ where 
$\mathscr M^{(j)}_{z, \textendash q}=\mathscr M_{z, \textendash q}|_{\mathscr H_j}$ for $j=1, \cdots, d.$ On the other hand, by \cite[Corollary 5.6]{CT},
the Hilbert space adjoint of $\mathscr M_{z, \textendash q}$ belongs to the Cowen-Douglas class $B_d(\mathbb D)$ (the reader is referred to \cite{CD} for the definition of Cowen-Douglas class $B_d(\cdot)$). Also, the Hilbert space adjoint of $\mathscr M^{(j)}_{z, \textendash q}$ belongs to $B_1(\mathbb D)$ for every $j=1, \cdots, d.$
Since the classification of homogeneous operators in 
$B_d(\mathbb D)$
is the same as that of the corresponding homogeneous holomorphic Hermitian bundles defined on
$\mathbb D$ \cite[Section 1]{BiM},
we may conclude from \cite[Corollary 2.1]{KM} that $\mathscr M^{(j)}_{z, \textendash q}$ must be homogeneous for every $j=1, \cdots, d.$ 
However, by \cite[List 3.1]{BM}, this is possible only if  for every $j=1, \cdots, d,$ there exist $a_j >0$ such that $\mathscr M^{(j)}_{z, \textendash q}$ is unitarily equivalent to the operator $M^{(j)}_z$ of multiplication by $z$ on a reproducing kernel Hilbert space $H_j$ associated with the kernel $\frac{1}{(1-z\overline{w})^{a_j}}~(z, w \in \mathbb D).$ 
For $j=1, \cdots, d$, let $U_j : H_j \rar \mathscr H_j$ be the unitary operator such that $$\mathscr M^{(j)}_{z, \textendash q}U_j =U_j M^{(j)}_z.$$ Note that $U_j$ maps $\ker (M^{(j)}_z)^*$ onto $\ker (\mathscr M^{(j)}_{z, \textendash q})^*$. 
Note that $g:=U_j (1 / \|1\|)  \in \ker (\mathscr M^{(j)}_{z, \textendash q})^*$. 
If $d=1$ then clearly $\mathscr T$ is isomorphic to $\mathbb N$. Suppose the contrary. Then
$g \neq e_{\rootb}$ for some $j=1, \cdots, d.$
Thus $g=\sum_{w \in \child{v}}\alpha_w e_w \in l^2(\mathsf{Chi}(v)) \ominus \langle \lambdab^v\rangle$ for some $v \in V_{\prec}$. Hence, for any integer $k \geqslant 1,$ by Lemma \ref{formula-moments}, 
\beqn
\|(\mathscr M^{(j)}_{z, \textendash q})^{k}g\|^2=\sum_{w \in \child{v}}|\alpha_w|^2  \frac{(n_w +1)_{k}}{(n_w+q)_{k}}=\frac{(n_v +2)_{k}}{(n_v+q+1)_{k}}.
\eeqn
On the other hand,
\beqn
\|(\mathscr M^{(j)}_{z, \textendash q})^{k}g\|^2=\|(M^{(j)}_z)^{k}(1/\|1\|)\|^2=
\frac{(1)_{k}}{(a_j)_{k}},
\eeqn
After combining last two equations for $k=1, 2$, we obtain $n_v+1=0$, which is absurd. Hence $d=1$, and   
we obtain (ii).
\end{proof}

The proof of the preceding proposition actually yields the following general fact.
\begin{corollary} \label{coro-h}
Let $S_{\lambda}$ be a left-invertible, homogeneous weighted shift on $\mathscr T$ with finite dimensional cokernel $E:=\ker S^*_{\lambda}.$ If $\{g_j : j=1, \cdots, d\}$ is an orthonormal basis of $E$ such that  
for $1 \Le i \neq j \Le d,$ the sequences $\{S^n_{\lambda}g_i\}_{n \in \mathbb N}$ and $\{S^n_{\lambda}g_j\}_{n \in \mathbb N}$ are mutually orthogonal, then there exists $a > 0$ such that $S_{\lambda}$ is unitarily equivalent to the operator of multiplication by $z$ on a reproducing kernel Hilbert space associated with the kernel $\frac{1}{(1-z\overline{w})^{a}}~(z, w \in \mathbb D).$
\end{corollary}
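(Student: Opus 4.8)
The plan is to run the argument from the proof of Proposition~\ref{homogeneous} in this more general setting; note that that proof used the explicit weights \eqref{weights-dual} of $S'_{\lambda,q}$ only in its closing computation. First I would invoke the analytic model \cite[Theorem 2.2]{CT}: since $S_\lambda$ is bounded and left-invertible, it is unitarily equivalent to the operator $\mathscr M_z$ of multiplication by $z$ on a $z$-invariant reproducing kernel Hilbert space $\mathscr H$ of $E$-valued holomorphic functions on $\mathbb D$ in which the $E$-valued polynomials are dense; hence $\mathscr H=\bigvee\{S^n_\lambda g_j:1\Le j\Le d,\ n\in\mathbb N\}$. Exactly as in \cite[Corollary 5.6]{CT}, the eigenvectors $\kappa(\cdot,w)g$ $(g\in E)$ of $\mathscr M_z^*$, which for each $w$ in a neighbourhood of the origin span a $d$-dimensional space and together span $\mathscr H$, exhibit $S_\lambda^*$ as a member of the Cowen--Douglas class $B_d(\mathbb D)$.

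Next I would use the mutual orthogonality hypothesis: the subspaces $\mathscr H_j:=\bigvee\{S^n_\lambda g_j:n\in\mathbb N\}$ are pairwise orthogonal with $\mathscr H=\bigoplus_{j=1}^d\mathscr H_j$, and each $\mathscr H_j$ is $S_\lambda$-invariant, so $S_\lambda=\bigoplus_{j=1}^d S^{(j)}_\lambda$ with $S^{(j)}_\lambda:=S_\lambda|_{\mathscr H_j}$ cyclic and $(S^{(j)}_\lambda)^*\in B_1(\mathbb D)$. Because $S_\lambda$ is homogeneous, its associated holomorphic Hermitian bundle is homogeneous and splits as the direct sum of the line bundles attached to the $S^{(j)}_\lambda$; since the classification of homogeneous operators in $B_d(\mathbb D)$ coincides with that of the corresponding homogeneous bundles \cite{BiM}, \cite[Corollary 2.1]{KM} forces each $S^{(j)}_\lambda$ to be homogeneous, and \cite[List 3.1]{BM} then supplies $a_j>0$ and a unitary $U_j$ intertwining $S^{(j)}_\lambda$ with the operator $M^{(j)}_z$ of multiplication by $z$ on the reproducing kernel Hilbert space with kernel $(1-z\overline w)^{-a_j}$. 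Since $U_j$ carries the normalized cyclic vector to a unit vector of $\ker(S^{(j)}_\lambda)^*=\langle g_j\rangle$, up to a unimodular scalar $U_j(1/\|1\|)=g_j$, and therefore
\[
\|S^k_\lambda g_j\|^2=\frac{(1)_k}{(a_j)_k}\qquad(k\in\mathbb N,\ j=1,\dots,d).
\]

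The remaining step, which I expect to be the main obstacle, is to prove that $a_1=\cdots=a_d=:a$; granting this, $S_\lambda\cong\bigoplus_{j=1}^dM^{(j)}_z$ is precisely multiplication by $z$ on the $E$-valued reproducing kernel Hilbert space with kernel $I_E(1-z\overline w)^{-a}$, which is the assertion. The idea is to feed the moment identities above into the defining recursion of a weighted shift on a tree: $\|S^k_\lambda e_u\|^2$ at a vertex $u$ is a weighted sum over $\childn{1}{u}$ of the corresponding quantities at the children of $u$, while the orthogonality hypothesis, unwound along the tree, additionally constrains the relevant sibling subtrees. Writing each $g_j$ in the basis $\{e_u\}$ and matching the Pochhammer sequences $(1)_k/(a_i)_k$ across two or three successive values of $k$ — exactly as in the closing two displays of the proof of Proposition~\ref{homogeneous}, where the analogous matching produced the contradiction $n_v+1=0$ — determines the leading and constant coefficients in $k$, and these are compatible only when all the $a_j$ coincide. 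Everything before this step is the bundle-theoretic machinery already assembled for Proposition~\ref{homogeneous}; the genuinely new work is this reconciliation of the exponents through the weight structure of $\mathscr T$.
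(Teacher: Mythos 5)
Your chain of reductions --- the analytic model of \cite[Theorem 2.2]{CT}, membership of $S_\lambda^*$ in the Cowen--Douglas class $B_d(\mathbb D)$, the orthogonal decomposition $\mathscr H=\bigoplus_{j=1}^d\mathscr H_j$ into cyclic $z$-invariant pieces with adjoints in $B_1(\mathbb D)$, homogeneity of each piece via \cite{BiM} and \cite[Corollary 2.1]{KM}, and the identification of each summand with multiplication by $z$ on the space with kernel $(1-z\overline w)^{-a_j}$ via \cite[List 3.1]{BM} --- is exactly the paper's argument: its entire proof of Corollary \ref{coro-h} is the remark that ``the proof of the preceding proposition actually yields the following general fact,'' and the proof of Proposition \ref{homogeneous} consists of precisely those steps before it specializes to the Dirichlet weights to extract the contradiction $n_v+1=0$. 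Up to that point you and the paper coincide.

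The divergence is the step you single out as the ``main obstacle,'' namely $a_1=\cdots=a_d$. The paper does not address it at all: what its argument delivers is only $S_\lambda\cong\bigoplus_{j=1}^d M_z^{(j)}$ with a priori distinct exponents, and the corollary's scalar kernel $\frac{1}{(1-z\overline w)^{a}}$ cannot be meant literally when $d\Ge 2$ (the cokernel dimensions do not match), so the intended reading is either the operator-valued kernel $\frac{I_E}{(1-z\overline w)^{a}}$ or simply the direct-sum conclusion. Under the latter reading your first five steps already finish the proof and the sixth is unnecessary; under the former, a common exponent really must be extracted, and your sketch --- feeding the moments $(1)_k/(a_j)_k$ into the recursion $\|S^{k+1}_\lambda e_u\|^2=\sum_{w\in\child{u}}\lambda_w^2\|S^k_\lambda e_w\|^2$ together with the orthogonality constraints --- is the right kind of argument (it does succeed, for instance, when the only branching vertex is the root, where it forces every $a_j=1$), but as written it is a plan rather than a proof: the general matching of coefficients across a tree with branching at several depths is asserted, not carried out, and the paper offers nothing to fall back on here. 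So the proposal reproduces the paper's proof on everything the paper actually proves, and the one genuinely new step it proposes is left incomplete --- though you have correctly located the exact point at which the paper's ``actually yields'' is doing unacknowledged work.
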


We do not know whether a rooted directed tree, which is non-isomorphic to $\mathbb N$, supports a homogeneous weighted shift.  

\section{Classification of Dirichlet Shifts}
For the sake of convenience, we reproduce the statement of Theorem \ref{solution} from Section 2.
\begin{theorem} 
Let $q$ be an integer bigger than $1.$
For $j=1, 2$, let
$S^{(j)}_{\lambda, q}$ be the Dirichlet shift on rooted directed tree $\mathscr T_j= (V_j, \mathcal E_j)$ with $\rootb_j$, let
$\mathcal G^{(j)}_n:=\{v \in V_j : v \in \childn{n}{\rootb_j}\}~(n \in \mathbb N),$ and $E_j=\ker (S^{(j)}_{\lambda, q})^*$. Then 
$S^{(1)}_{\lambda, q}$ is unitarily equivalent to $S^{(2)}_{\lambda, q}$ if and only if
for every $n \in \mathbb N,$ 
\beq \label{constant-g2} \sum_{v \in V^{(1)}_{\prec} \cap \mathcal G^{(1)}_n}\Big(\mbox{card}(\child{v})-1\Big)=\sum_{v \in V^{(1)}_{\prec} \cap \mathcal G^{(2)}_n}\Big(\mbox{card}(\child{v})-1\Big).\eeq
\end{theorem}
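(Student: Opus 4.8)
The plan is to reduce the unitary equivalence of the Dirichlet shifts $S^{(1)}_{\lambda,q}$ and $S^{(2)}_{\lambda,q}$ to a statement about the Bergman spaces $\mathscr H_{\textendash q}$ (equivalently, the Cauchy duals $S^{(j)}_{\lambda',q}$), which by Lemma \ref{B-norm} can be realized concretely as $L^2_a(\nu_{{}_{\mathscr T_j}})$ for the explicit operator-valued weight $\nu_{{}_{\mathscr T_j}}$. The first observation is that $S^{(1)}_{\lambda,q}$ and $S^{(2)}_{\lambda,q}$ are unitarily equivalent if and only if their Cauchy duals $S^{(1)}_{\lambda',q}$ and $S^{(2)}_{\lambda',q}$ are, since the Cauchy dual construction $T \mapsto T(T^*T)^{-1}$ is intertwined by any unitary. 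Hence it suffices to classify the multiplication operators $\mathscr M^{(j)}_{z,\textendash q}$ on the Bergman spaces.

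Next I would exploit the orthogonal direct sum structure. By Proposition \ref{S-c-a-kernel-dim1}(iv) (and the remark that \eqref{orthog} holds for $S_{\lambda',q}$ as well), picking an orthonormal basis $\{g_j\}$ of $E_j$ adapted to the decomposition \eqref{kernel} — one vector $e_{\rootb_j}$ plus, for each branching vertex $v$, an orthonormal basis of $l^2(\child v)\ominus\langle\lambdab^v\rangle$ — yields $\mathscr H_{\textendash q}^{(j)}=\bigoplus \mathscr H^{(j)}_{g}$ where each summand $\bigvee\{z^n g\}$ is a cyclic reproducing kernel Hilbert space. By Lemma \ref{formula-moments}, the summand generated by the root vector is (unitarily) the classical Bergman space $B_q$, i.e.\ $\frac{1}{(1-z\overline w)^q}$; the summand generated by a basis vector supported on $\child v$ has monomial norms $\frac{(n_v+2)_k}{(n_v+q+1)_k}$, so it is unitarily the weighted space with kernel $\sum_n \frac{(n_v+q+1)_n}{(n_v+2)_n}z^n\overline w^n$. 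Crucially, for integer $q$ this kernel is a rational function $\frac{p_v(z\overline w)}{(1-z\overline w)^q}$, so each summand is (a rank-one bundle that is) an $N$-tuple-type/Cowen-Douglas $B_1(\mathbb D)$ operator whose unitary class is determined by the germ of its curvature at $0$, equivalently by the full sequence of moments $\{(n_v+2)_k/(n_v+q+1)_k\}_k$, equivalently — since these are strictly monotone in $n_v$ — by the single integer $n_v$ (the depth). So two such cyclic summands are unitarily equivalent if and only if their generating branching vertices sit at the same depth.

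The core step is then: $\mathscr M^{(1)}_{z,\textendash q}\cong\mathscr M^{(2)}_{z,\textendash q}$ iff the two multisets of cyclic summands agree up to unitary equivalence, i.e.\ iff for each depth $n$ the number of cyclic summands of ``depth $n$'' is the same on both sides. On the tree $\mathscr T_j$, the depth-$0$ summand always contributes exactly one piece (the root), and each branching vertex $v\in V^{(j)}_\prec\cap\mathcal G^{(j)}_n$ contributes $\dim\big(l^2(\child v)\ominus\langle\lambdab^v\rangle\big)=\mathrm{card}(\child v)-1$ summands of depth $n$. Hence the depth-$n$ multiplicity equals $\sum_{v\in V^{(j)}_\prec\cap\mathcal G^{(j)}_n}(\mathrm{card}(\child v)-1)$ for $n\ge 1$ (and one extra unit at $n=0$ on both sides), and equality for all $n$ is exactly condition \eqref{constant-g2}. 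To make the ``iff'' rigorous I would invoke uniqueness of the direct-sum decomposition into $B_1(\mathbb D)$ pieces: adjoints of these operators lie in a Cowen-Douglas class (as in \cite[Corollary 5.6]{CT}), so by the general theory a unitary between two such direct sums must, after suitable bookkeeping, match the summands; alternatively one can compare the two operators on the joint kernel of $[\,\cdot\,,\,\cdot\,]$-type invariants, or simply compare reproducing kernels $\kappa_{\mathscr H_{\textendash q}}$ directly.

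The main obstacle I anticipate is precisely this last uniqueness/rigidity point: showing that a unitary equivalence of the two big Bergman spaces forces a depth-preserving matching of the cyclic pieces, rather than merely preserving some coarser invariant. One clean way around it is to not decompose at all, but instead compare the operator-valued reproducing kernels: by Proposition \ref{S-c-a-kernel-dim1}(ii) the kernel $\kappa_{\mathscr H_{\textendash q}}$ is $\bigoplus_n \phi_n(z\overline w)\,P_n$ for scalar functions $\phi_n$ indexed by depth with $P_n$ a projection of rank $1+\sum_{v\in V_\prec\cap\mathcal G_n}(\mathrm{card}(\child v)-1)$, and since the $\phi_n$ are pairwise distinct the curvature (or any complete unitary invariant of the kernel) recovers exactly the list of ranks $(\mathrm{rank}\,P_n)_n$; a unitary equivalence of the multiplication operators forces equality of these invariants, which is \eqref{constant-g2}. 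The remaining care is bookkeeping the ``$+1$'' from the root at $n=0$ (which is common to both trees and so cancels) and confirming that $\mathrm{card}(V_j)=\aleph_0$ together with local finiteness makes every $P_n$ finite rank so the invariants are well defined.
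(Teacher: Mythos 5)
Your reduction (pass to the Cauchy duals, realize them as multiplication by $z$ on the Bergman spaces $\mathscr H_{\textendash q}$, decompose $\ker \mathscr M_{z,\textendash q}^*$ by depth, and count) is exactly the architecture of the paper's proof, and your sufficiency direction -- matching up cyclic summands of equal depth once the counts in \eqref{constant-g2} agree -- is sound and essentially what the paper does (it builds the block unitary $U=\oplus_n U_n$ on the kernel and extends it to $L^2_a(\nu_{{}_{\mathscr T}})$). The gap is in the necessity direction, at precisely the point you flag. Your first fix, ``uniqueness of the direct-sum decomposition into $B_1(\mathbb D)$ pieces,'' is not an off-the-shelf fact in the form you need: intertwiners between multiplication operators on two reproducing kernel Hilbert spaces of analytic functions are multipliers and are typically abundant (e.g.\ the inclusion of the Dirichlet space into the Hardy space intertwines the two shifts), so a unitary between two direct sums of such operators need not be block diagonal a priori; forcing it to respect the depth grading is the whole content of the theorem. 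Your second fix (curvature as a complete invariant) can be made to work, but it leans on the Cowen--Douglas class $B_d(\mathbb D)$ with $d<\infty$, whereas Theorem \ref{solution} does \emph{not} assume finite branching index, so $V_{\prec}$ and hence $\ker S_{\lambda,q}^*$ may be infinite-dimensional and \cite[Corollary 5.6]{CT} does not apply directly; one would have to develop the infinite-rank version and justify that the eigenvalue multiplicities of the curvature at a point are unitary invariants there.

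The paper closes this gap differently: from $U\mathscr M^{(1)}_{z,q}=\mathscr M^{(2)}_{z,q}U$ it deduces, by integrating monomials against the operator-valued measures and applying Stone--Weierstrass and the Riesz representation theorem, the rigidity identity $U^*\nu_{{}_{\mathscr T_2}}(\sigma)U=\nu_{{}_{\mathscr T_1}}(\sigma)$ for all Borel $\sigma$; since the radial densities $w_0,w_1,w_2,\dots$ in \eqref{measure-0} are pairwise distinguishable by integration over suitable Borel sets, $U$ must carry $\langle e_{\rootb_1}\rangle$ to $\langle e_{\rootb_2}\rangle$ and each depth-$n$ block onto the corresponding depth-$n$ block, which yields \eqref{constant-g2}. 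If you want to stay within your framework, there is an elementary repair that avoids both curvature and measures: a unitary $U$ with $US^{(1)}_{\lambda,q}=S^{(2)}_{\lambda,q}U$ maps $E_1$ onto $E_2$ and satisfies $\langle S^{(1)*}S^{(1)}f,g\rangle=\langle S^{(2)*}S^{(2)}Uf,Ug\rangle$ for $f,g\in E_1$; since $S_{\lambda,q}^*S_{\lambda,q}e_u=\frac{n_u+q}{n_u+1}e_u$ depends only on depth, $S^*S$ leaves $E$ invariant and acts on the root line by $q$ and on the block at $v\in V_{\prec}$ by $\frac{n_v+q+1}{n_v+2}$; these scalars are pairwise distinct and never equal to $q$ for $q>1$, so $U|_{E_1}$ conjugates two diagonalizable operators and must preserve eigenvalue multiplicities, which are exactly the sums in \eqref{constant-g2}.
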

\begin{proof}
Note that $S^{(1)}_{\lambda, q}$ is unitarily equivalent to $S^{(2)}_{\lambda, q}$ if and only if $S^{(1)}_{\lambda', q}$ is unitarily equivalent to $S^{(2)}_{\lambda', q}$. Hence, in view of Lemma \ref{B-norm}, it suffices to check that the multiplication operator $\mathscr M^{(1)}_{z, q}$ on $L^2_a(\nu_{{}_{\mathscr T_1}})$ is unitarily equivalent to the multiplication operator $\mathscr M^{(2)}_{z, q}$ on $L^2_a(\nu_{{}_{\mathscr T_2}})$ if and only if \eqref{constant-g2} holds for every $n \in \mathbb N$. 

Let $U : L^2_a(\nu_{{}_{\mathscr T_1}}) \rar L^2_a(\nu_{{}_{\mathscr T_2}})$ be a unitary map such that \beq \label{intw} U\mathscr M^{(1)}_{z, q} = \mathscr M^{(2)}_{z, q}U.\eeq Note that $U$ maps $\ker (\mathscr M^{(1)}_{z, q})^*$ unitarily onto $\ker (\mathscr M^{(2)}_{z, q})^*.$ For $g, h \in \ker (\mathscr M^{(1)}_{z, q})^*,$ define a signed finite measure $\mu_{g, h}$ on the open unit disc $\mathbb D$ by
\beqn
\mu_{g, h}(\sigma) = \int_{\sigma}\big\langle{\big (U^*d\nu_{{}_{\mathscr T_2}}(z)U - d\nu_{{}_{\mathscr T_1}}(z)\big)g}, {h}\big \rangle~\mbox{for a Borel subset}~\sigma~\mbox{of~}\mathbb D.
\eeqn 
Note that
\beqn
\int_{\mathbb D}z^n \overline{z}^m\big\langle{U^*d\nu_{{}_{\mathscr T_2}}(z)Ug}, {h}\big \rangle &=& \inp{z^nUg}{z^mUh}_{L^2_a(\nu_{{}_{\mathscr T_2}})} \overset{\eqref{intw}}= \inp{Uz^ng}{Uz^mh}_{L^2_a(\nu_{{}_{\mathscr T_2}})} \\ &=& \inp{z^ng}{z^mh}_{L^2_a(\nu_{{}_{\mathscr T_1}})} = \int_{\mathbb D}z^n \overline{z}^m\big\langle{d\nu_{{}_{\mathscr T_1}}(z)g}, {h}\big \rangle.
\eeqn
It follows that $\int_{\mathbb D}p(z, \overline{z})d\mu_{g, h}=0$ for any polynomial $p$ in $z$ and $\overline{z}$. By Stone-Weierstrass and Riesz Representation Theorems,  $\mu_{g, h}$ is identically $0.$ Since this holds for arbitrary choices of $g, h$ in $\ker (\mathscr M^{(1)}_{z, q})^*$, we conclude that 
\beq \label{uni-E}
U^*\nu_{{}_{\mathscr T_2}}(\sigma)U = \nu_{{}_{\mathscr T_1}}(\sigma)~\mbox{for every Borel subset}~\sigma~\mbox{of~}\mathbb D.
\eeq 
It is now clear from \eqref{measure-0} that for every Borel subset $\sigma$ of $\mathbb D$, $a \in \mathbb C$, and $b_{v} \in l^2(\mathsf{Chi}(v)) \ominus \langle \lambdab^v\rangle~(v \in V^{(1)}_{\prec})$, 
\beq \label{eqn} 
&& \nu_{{}_{\mathscr T_2}}(\sigma)U \Big(a e_{\rootb_1} + \sum_{v \in V^{(1)}_{\prec}} b_{v}\Big) \notag \\ &=& U\int_{\sigma}\Big(a w_0(z) e_{\rootb_1} + \sum_{v \in V^{(1)}_{\prec}} w_{n_v+1}(z)b_v \Big)dA(z). 
\eeq
Since $Ue_{\rootb_1} \in \ker (\mathscr M^{(2)}_{z, q})^*,$ there exist $a' \in \mathbb C$, and $b'_{v} \in l^2(\mathsf{Chi}(v)) \ominus \langle \lambdab^v\rangle$ for $v \in V^{(2)}_{\prec}$
such that
\beqn
Ue_{\rootb_1} = a'e_{\rootb_2} + \sum_{v \in V^{(2)}_{\prec}}b'_v.
\eeqn
Letting $a=1$ and $b_v=0$ for all $v \in V^{(1)}_{\prec}$ in \eqref{eqn}, we obtain 
\beqn
\Big(\int_{\sigma}w_0(z) dA(z)\Big)\Big(a'e_{\rootb_2} + \sum_{v \in V^{(2)}_{\prec}}b'_v\Big) &=&
\Big(\int_{\sigma}w_0(z) dA(z)\Big)Ue_{\rootb_1} \\ &\overset{\eqref{eqn}}=& \nu_{{}_{\mathscr T_2}}(\sigma)U e_{\rootb_1} 
\\
&\overset{\eqref{measure-0}}=& \int_{\sigma}\Big(a' w_0(z) dA(z)\Big)e_{\rootb_2} 
\\ &+& \sum_{v \in V^{(2)}_{\prec}} \int_{\sigma}\Big(w_{n_v+1}(z)dA(z)\Big)b'_v
\eeqn
for every Borel subset $\sigma$ of $\mathbb D$.
Since for some Borel set $\sigma,$ $\int_{\sigma}w_0(z) dA(z) \neq \int_{\sigma}w_l(z) dA(z)$ for any positive integer $l,$ by
comparing coefficients on both sides, we obtain
$
 b'_v = 0~\mbox{for every}~v \in V^{(2)}_{\prec},$ and hence $Ue_{\rootb_1} = a'e_{\rootb_2}$ with $|a'|=1.$ This shows that
 \beqn 
 U(\langle e_{\rootb_1} \rangle) =\langle e_{\rootb_2} \rangle.
 \eeqn
Since $U$ maps $\ker (\mathscr M^{(1)}_{z, q})^*$ unitarily onto $\ker (\mathscr M^{(2)}_{z, q})^*$, we obtain
\beq \label{U-r-r}
 U \Big(\bigoplus_{v \in V^{(1)}_{\prec}} l^2(\mathsf{Chi}(v)) \ominus \langle \lambdab^v\rangle) \Big)= \bigoplus_{v \in V^{(2)}_{\prec}} l^2(\mathsf{Chi}(v)) \ominus \langle \lambdab^v\rangle.
\eeq 
Next,
since $Ub_{v_0} \in \ker (\mathscr M^{(2)}_{z, q})^*$ for a fixed $v_0 \in V^{(1)}_{\prec},$
there exist  $b'_{v} \in l^2(\mathsf{Chi}(v)) \ominus \langle \lambdab^v\rangle$ for $v \in V^{(2)}_{\prec}$
such that
\beqn
Ub_{v_0} =  \sum_{v \in V^{(2)}_{\prec}}b'_v.
\eeqn
As in the preceding paragraph, one may let $a=0$ and $b_v=0$ for all $v \in V^{(1)}_{\prec} \setminus \{v_0\}$ in \eqref{eqn} to obtain 
\beqn
\Big(\int_{\sigma}w_{n_{v_0}+1}(z) dA(z)\Big)\Big(\sum_{v \in V^{(2)}_{\prec}}b'_v\Big) 
&=&  \sum_{v \in V^{(2)}_{\prec}} \int_{\sigma}\Big(w_{n_v+1}(z)dA(z)\Big)b'_v. 
\eeqn
Since for some Borel set $\sigma,$ $\int_{\sigma}w_l(z) dA(z) \neq \int_{\sigma}w_m(z) dA(z)$ for positive integers $l \neq m,$ by
comparing coefficients on both sides, we obtain
$
 b'_v = 0~\mbox{for every}~v \in V^{(2)}_{\prec} \cap \mathcal G^{(2)}_n$ with $n \neq n_{v_0},$ 
and hence $Ub_{v_0} =\sum_{v \in V^{(2)}_{\prec} \cap \mathcal G_{n_{v_0}}}b'_v .$
Since $v_0$ is arbitrary, by \eqref{U-r-r}, 
$U$ must map $W^{(n)}_1$ injectively onto $W^{(n)}_2$ for every $n \in \mathbb N$, where
$$W^{(n)}_j=\bigoplus_{v \in V^{(j)}_{\prec} \cap \mathcal G^{(j)}_{n}} l^2(\mathsf{Chi}(v)) \ominus \langle \lambdab^v\rangle, \quad n \in \mathbb N, ~j=1, 2.$$
Thus for every $n \in \mathbb N,$
\beqn
\sum_{v \in V^{(1)}_{\prec} \cap \mathcal G^{(1)}_{n}} \Big(\mbox{card}(\mathsf{Chi}(v))-1 \Big)= \sum_{v \in V^{(2)}_{\prec} \cap \mathcal G^{(2)}_{n}}\Big(\mbox{card}(\mathsf{Chi}(v))-1\Big).
\eeqn

Conversely, suppose \eqref{constant-g2} holds for every $n \in \mathbb N.$ For $n \in \mathbb N$ and $j=1, 2$, let $$\mathcal M_{j, 0}=\langle e_{\rootb_j} \rangle, \quad \mathcal M_{j, n} =\bigoplus_{v \in V^{(1)}_{\prec} \cap \mathcal G^{(j)}_{n}} l^2(\mathsf{Chi}(v)) \ominus \langle \lambdab^v\rangle.$$
By \eqref{constant-g2}, we must have $\dim \mathcal M_{1, n}=\dim \mathcal M_{2, n}$ for every $n \in \mathbb N.$ Let $U = \oplus_{n \in \mathbb N}U_n$, where $U_n$ is a unitary from $\mathcal M_{1, n}$ onto $\mathcal M_{2, n}$ for $n \in \mathbb N.$ Further, we can choose $U_0$ such that $Ue_{\rootb_1}=e_{\rootb_2}$.
Note that $U$ is a unitary from 
$\ker (\mathscr M^{(1)}_{z, q})^*$ onto $\ker (\mathscr M^{(2)}_{z, q})^*$. We verify that $U$ satisfies \eqref{uni-E}. It suffices to check that 
\beq \label{uni-E-1}
U^*\nu_{{}_{\mathscr T_2}}(\sigma)Uf=\nu_{{}_{\mathscr T_1}}(\sigma)f,
\eeq
where $f=\alpha e_{\rootb_1} + \beta b_{v, n}$ for $\alpha, \beta \in \mathbb C$ and $\{b_{v, n}\}$ is an orthonormal basis for $\mathcal M_{1, n}$ for integers $n \ge 1.$
Indeed, for a Borel subset $\sigma$ of $\mathbb D,$
\beqn
U^*\nu_{{}_{\mathscr T_2}}(\sigma)Uf &=& U^*\nu_{{}_{\mathscr T_2}}(\sigma)(\alpha e_{\rootb_2} + \beta Ub_{v, n}) \\ &=& U^*\int_{\sigma}\Big(\alpha w_0(z) e_{\rootb_2} + \beta w_{n+1}(z)Ub_{v, n} \Big)dA(z) \\
&=& \int_{\sigma}\Big(\alpha w_0(z) e_{\rootb_1} + \beta w_{n+1}(z)b_{v, n} \Big)dA(z) \\ 
&=& \nu_{{}_{\mathscr T_1}}(\sigma)f,
\eeqn
which completes the verification of \eqref{uni-E-1}.
One can now define $\tilde{U} : L^2_a(\nu_{{}_{\mathscr T_1}}) \rar L^2_a(\nu_{{}_{\mathscr T_2}})$ by \beq \label{unitary} (\tilde{U}f)(z)=U(f(z))~(f \in L^2_a(\nu_{{}_{\mathscr T_1}}), ~z \in \mathbb D).\eeq 
It is easy to see using \eqref{uni-E} that $\tilde{U}$ is a unitary map such that $\tilde{U}\mathscr M^{(1)}_{z, q} = \mathscr M^{(2)}_{z, q}\tilde{U}.$
\end{proof}


\medskip \textit{Acknowledgment}. \
The authors convey sincere thanks to Gadadhar Misra for some fruitful conversations pertaining to the theory of homogeneous operators.
In particular, they acknowledges his help for 
providing an exact reference for a fact essential in the proof of Proposition \ref{homogeneous}. Further, the last author expresses his gratitude to the Department of Mathematics, IIT Kanpur for their warm hospitality during the preparation of this paper.

\end{document}